\documentclass[12pt,a4paper]{amsart}
\usepackage[hmargin=2.5cm,vmargin=2.5cm]{geometry}
\usepackage{graphicx} 
\graphicspath{{./Figures}}


\usepackage{amssymb}
\usepackage{amsfonts}
\usepackage{amsmath}
\newtheorem{theorem}{Theorem}

\newtheorem{corollary}{Corollary}

\newtheorem{lemma}{Lemma}

\newtheorem{remark}{Remark}

\usepackage{color}

\begin{document}

\title[Half-space problem for polyatomic gases and entropy inequalities]{The half-space problem of evaporation and condensation for polyatomic gases and entropy inequalities}

\author[N. Bernhoff]{Niclas Bernhoff}
\address{NB: Department of Mathematics and Computer Science, Karlstad University, Universitetsgatan 2, 65188 Karlstad, Sweden}
\email{niclas.bernhoff@kau.se}

\author[S. Brull]{Stéphane Brull}
\address{SB: Institut de Mathématiques de Bordeaux, Bordeaux INP, Université Bordeaux, CNRS, 351, cours de la Libération, Talence, F-33400, France}
\email{stephane.brull@math.u-bordeaux.fr}

\author[E. Wadbro]{Eddie Wadbro}
\address{EW: Department of Mathematics and Computer Science, Karlstad University, Universitetsgatan 2, 65188 Karlstad, Sweden}
\email{eddie.wadbro@kau.se}

\keywords{Polyatomic gases, entropy inequalities, Boltzmann equation, half-space problem}

\date{\today}

\maketitle

\textbf{Abstract:} 
This study investigates the steady Boltzmann equation in one spatial variable for a polyatomic single-component gas in a half-space. 
Inflow boundary conditions are assumed at the half-space boundary, where particles entering the half-space are distributed as a Maxwellian, an equilibrium distribution characterized by macroscopic parameters of the boundary. 
At the far end, the gas tends to an equilibrium distribution, which is also Maxwellian.
Using conservation laws and an entropy inequality, we derive relations between the macroscopic parameters of the boundary and at infinity required for the existence of solutions.
The relations vary depending on the sign of the Mach number at infinity, which dictates whether evaporation or condensation takes place at the interface between the gas and the condensed phase.
We explore the obtained relations numerically.
This investigation reveals that, although the relations are qualitatively similar for various internal degrees of freedom of the gas, clear quantitative differences exist.

\section{Introduction} 
In their studies, Bobylev et al.~\cite{BGH-01} and Sone et al.~\cite{STG-01} derived qualitative and quantitative estimates of strongly nonequilibrium states using an entropy inequality, bypassing the need to solve the Boltzmann equation.
This work generalizes their results for monatomic single species to include polyatomic single species, exploring possible similarities and differences for various internal degrees of freedom.

Half-space problems are crucial for understanding the asymptotic behavior of the Boltzmann equation at small Knudsen numbers~\cite{Sone-02, Sone-07}.
Here we study the half-space problem for the steady Boltzmann equation, when the distribution function depends on one single spatial variable, while the velocity variable is in the full space.
At infinity, the distribution function tends to an equilibrium distribution---a Maxwellian.
We assume inflow, or complete absorption, boundary conditions at the boundary or at the condensed phase of the gas.
The particles coming from the boundary are distributed as a Maxwellian characterized by the macroscopic parameters of the boundary.
The Maxwellians are characterized by their macroscopic parameters: number density $n$, flow velocity $\mathbf{u}$, and temperature $T$, which can also be expressed in terms of pressure $p$, Mach number $\boldsymbol{\mathcal{M}}$, and temperature $T$.
The relations of the macroscopic parameters at the boundary and at infinity have been well studied for monatomic species, see Refs.~\cite{BGH-01, STG-01, BGS-06, TG-07} and references therein. In particular, we want to mention the seminal papers by Y. Sone, K. Aoki, and coworkers~\cite{SS-90, SAY-86, ASY-90, ANSS-91}, whose results are based on intensive numerical simulations and theoretical considerations.
Assuming the Maxwellian of the boundary to be at rest, the half-space problem can be characterized by the pressure and temperature ratios and the Mach number $\boldsymbol{\mathcal{M}}_{\infty }$ at infinity.
Assuming, for simplicity, that $\boldsymbol{\mathcal{M}}_{\infty}=\left( \mathcal{M}_{\infty 1},0,0\right)$---remaining with three parameters---there will be~\cite{SS-90, SAY-86, ASY-90, ANSS-91}: (i) no solution---three relations between the parameters, and no free parameter---for $\mathcal{M}_{\infty 1}>1$ (supersonic evaporation); (ii) a one-parameter solution---two relations
between the parameters, and one free parameter---for $0\leq \mathcal{M}_{\infty 1}<1$ (subsonic evaporation); (iii) a two-parameter solution---one relation between the parameters, and two free parameters--for $0<\mathcal{M}_{\infty 1}<-1$ (subsonic condensation); (iv) a three-parameter solution---no relation between the parameters, and three free parameters (the domain remains constrained by physical properties)---for $\mathcal{M}_{\infty 1}>-1$ (supersonic condensation).
To the authors' knowledge, the case of polyatomic---including diatomic---gases is not as well studied, despite its importance for a more realistic physical description of many real-world situations.

In this paper, the polyatomicity is modeled by a continuous variable for the (total) internal energy.
To account for the degeneracies of the internal energy levels, a ``weight'' function---density of states---is introduced~\cite{BDLP-94, GP-23, Be-23b}.
The power-law density of the states, which is applied, can be motivated for the case of rotational energy~\cite{BBCG-25}.
For calorically perfect gases, the model enables the recovery of proper forms of the specific internal energy~\cite{AMR-24, BBCG-25} and the ratio of specific heats~\cite{BDLP-94}, and subsequently recovers the compressible Euler equations in the hydrodynamic limit.
The mathematical properties of this model have recently attracted attention~\cite{GP-23, Be-23b, DL-23, BST-24}.
For extensions to multicomponent gases of monatomic and polyatomic species, we refer to the works by Baranger et al.~\cite{BBBD-18}, Alonso et al.~\cite{ACG-24}, and Bernhoff~\cite{Be-24a}, and for chemically reactive gases, we refer to the works by Desvillettes et al.~\cite{DMS-05} and Bernhoff~\cite{Be-24c}. 
For a polyatomic model based on a discrete energy variable, we refer to the research by Ern and Giovangigli~\cite{ErnGio-94}, Groppi and Spiga~\cite{GS-99}, and Bernhoff~\cite{Be-23a, Be-24b}.
Borsoni, Bisi, and Groppi~\cite{BBG-21, BBG-24} introduced a general framework that unifies the different approaches.

Based on the existence results for a general linear half-space problem in kinetic energy by Bernhoff~\cite{Be-23d}, one obtains, for a hard sphere-like model~\cite{Be-23b}, the same number of relations---as in the monatomic case---between the parameters for existence of solutions to the half-space problem for the linearized Boltzmann equation with general inflow boundary conditions.
This suggests that qualitatively similar parameter relationships may be expected for polyatomic gases as for monatomic gases.
In fact, qualitatively similar estimates for the parameters' relations as those for monatomic gases~\cite{BGH-01, STG-01},  are obtained for different internal degrees of freedom.
This is visually illustrated by numerically determined domains, restricted by the obtained estimates.
Here, the quantitative differences for various internal degrees of freedom can also be observed.
We emphasize that all obtained results are independent of any specific choice of collision kernel, relying solely on the conservation laws and the $\mathcal{H}$-theorem---implying the form of the Maxwellian distribution---presented in Section~\ref{MPCO}.

The remainder of the paper is organized as follows.
The kinetic model is presented in Section~\ref{KM}, while the precise formulation of the half-space problem and an existence result for the linearized Boltzmann equation are addressed in Section~\ref{HSP}.
Section~\ref{NCBD} concerns explicit estimates for the macroscopic parameters obtained by using the $\mathcal{H}$-theorem summarized in Lemma~\ref{L2}.
Furthermore, Lemma~\ref{L3} explicitly shows that, similarly as for a monatomic gas, with a Mach number of zero at infinity, there is a unique (Maxwellian) solution.
One key idea in Section~\ref{NCBD} is to represent the entropy flux as the difference of a convex functional for two different values~\cite{BGH-01}. 
A lower estimate for the functional is obtained in Section~\ref{LE}. 
This lower estimate is applied to the current problem in Section~\ref{EPE}.
Finally, Section~\ref{NR} concerns numerical results for the domains, where the obtained estimates for the macroscopic parameters are fulfilled.
The maximal entropy production curves (for a given Mach number at infinity), or surfaces (for a given Mach number at infinity and pressure ratio) for evaporation and condensation, respectively, are also obtained numerically.   

\section{Kinetic model \label{KM}}
In this section, the kinetic model considered is presented.

\subsection{Microscopic model}

In this work, we consider a single species gas of polyatomic molecules with
microscopic mass $m$. Polyatomicity is modeled by a continuous
microscopic internal energy variable $I\in $ $\mathbb{R}_{+}$. The
distribution function of the molecules in the gas is a nonnegative function
of the form $f=f\left( t,\mathbf{x},\boldsymbol{\xi },I\right) $, with time $%
t\in \mathbb{R}_{+}$, microscopic position $\mathbf{x}=\left(
x_{1},x_{2},x_{3}\right) \in \mathbb{R}^{3}$, and microscopic velocity $%
\boldsymbol{\xi }=\left( \xi _{1},\xi _{2},\xi _{3}\right) \in \mathbb{R}%
^{3} $. Denote by $\mathcal{\mathfrak{h:}}=L^{2}\left( d\boldsymbol{\xi \,}%
\mathbf{\,}dI\right) $ the real Hilbert space with inner product%
\begin{equation*}
\left( f,g\right) =\int_{\mathbb{R}^{3}\times \mathbb{R}_{+}}fg\,d%
\boldsymbol{\xi \,}dI\text{ for }f,g\in L^{2}\left( d\boldsymbol{\xi \,}%
\mathbf{\,}dI\right) \text{.}
\end{equation*}

The evolution of the distribution functions is (in the absence of external
forces) described by the Boltzmann equation%
\begin{equation}
\frac{\partial f}{\partial t}+\left( \boldsymbol{\xi }\cdot \nabla _{\mathbf{%
x}}\right) f=Q\left( f,f\right) \text{,}  \label{BE1}
\end{equation}%
where the collision operator $Q=Q\left( f,f\right) $ is a quadratic bilinear
operator that accounts for the change of velocities and internal energies of
particles due to binary collisions (assuming that the gas is rarefied, such
that other collisions are negligible).

A collision (localized in space and time) can be represented by the
microscopic velocities and internal energies of the colliding molecules
before and after the collision, denoted by $\left( \boldsymbol{\xi }%
,I\right) $ and $\left( \boldsymbol{\xi }_{\ast },I_{\ast }\right) $, and $%
\left( \boldsymbol{\xi }^{\prime },I^{\prime }\right) $ and $\left( 
\boldsymbol{\xi }_{\ast }^{\prime },I_{\ast }^{\prime }\right) $,
respectively. Momentum conservation and total energy conservation of the collision
read 
\begin{align*}
\boldsymbol{\xi }+\boldsymbol{\xi }_{\ast } &=\boldsymbol{\xi }^{\prime }+%
\boldsymbol{\xi }_{\ast }^{\prime } \\
\frac{m}{2}\left\vert \boldsymbol{\xi }\right\vert ^{2}+\frac{m}{2}%
\left\vert \boldsymbol{\xi }_{\ast }\right\vert ^{2}+I+I_{\ast } &=\frac{m}{%
2}\left\vert \boldsymbol{\xi }^{\prime }\right\vert ^{2}+\frac{m}{2}%
\left\vert \boldsymbol{\xi }_{\ast }^{\prime }\right\vert ^{2}+I^{\prime
}+I_{\ast }^{\prime }\text{.}
\end{align*}%
In the center-of-mass frame, energy conservation reads%
\begin{equation*}
E:=\frac{m}{4}\left\vert \boldsymbol{\xi }-\boldsymbol{\xi }_{\ast
}\right\vert ^{2}+I+I_{\ast }=\frac{m}{4}\left\vert \boldsymbol{\xi }%
^{\prime }-\boldsymbol{\xi }_{\ast }^{\prime }\right\vert ^{2}+I^{\prime
}+I_{\ast }^{\prime }=:E^{\prime }\text{,}
\end{equation*}%
defining the total energy $E$ in the center-of-mass frame.

Based on the Borgnakke--Larsen model \cite{BL-75} a collision can be
categorized by a parametrization\textbf{\ }$\boldsymbol{\sigma }\in \mathbb{S%
}^{2}$, and $\left( r,R\right) \,\in \left[ 0,1\right] ^{2}$, where $R$
reflects the proportion of the kinetic energy $\dfrac{m}{4}\left\vert 
\boldsymbol{\xi }^{\prime }-\boldsymbol{\xi }_{\ast }^{\prime }\right\vert
^{2}$ relative to the total energy $E$ in the center of mass frame, while $r$
reflects the distribution of the total internal energy $I^{\prime }+I_{\ast
}^{\prime }$ between the two molecules, or, more explicitly%
\begin{equation*}
r=\frac{I^{\prime }}{I^{\prime }+I_{\ast }^{\prime }}\text{, }R=\frac{%
m\left\vert \boldsymbol{\xi }^{\prime }-\boldsymbol{\xi }_{\ast }^{\prime
}\right\vert ^{2}}{4E}\text{ , and }\boldsymbol{\sigma }=\dfrac{\boldsymbol{%
\xi }^{\prime }-\boldsymbol{\xi }_{\ast }^{\prime }}{\left\vert \boldsymbol{%
\xi }^{\prime }-\boldsymbol{\xi }_{\ast }^{\prime }\right\vert }\text{.}
\end{equation*}%
Then the following expressions for microscopic velocities and internal
energies are obtained:%
\begin{eqnarray}
\boldsymbol{\xi }^{\prime } &=&\dfrac{\boldsymbol{\xi }+\boldsymbol{\xi }%
_{\ast }}{2}+\sqrt{\dfrac{RE}{m}}\boldsymbol{\sigma }\text{ and }\boldsymbol{%
\xi }_{\ast }^{\prime }=\dfrac{\boldsymbol{\xi }+\boldsymbol{\xi }_{\ast }}{2%
}-\sqrt{\dfrac{RE}{m}}\boldsymbol{\sigma }\text{, while}  \notag \\
I^{\prime } &=&r(1-R)E\text{ and }I_{\ast }^{\prime }=\left( 1-r\right)
(1-R)E\text{.}  \label{BLP}
\end{eqnarray}

\subsection{Collision operator}

The collision operator in the Boltzmann equation~\eqref{BE1}
for polyatomic molecules can be written in the form 
\begin{equation}
Q(f,f)=\int_{\left( \mathbb{R}^{3}\times \mathbb{R}_{+}\right) ^{3}}W\left( 
\frac{f^{\prime }f_{\ast }^{\prime }}{\varphi ^{\prime }\varphi _{\ast
}^{\prime }}-\frac{ff_{\ast }}{\varphi \varphi _{\ast }}\right) \,d%
\boldsymbol{\xi }_{\ast }d\boldsymbol{\xi }^{\prime }d\boldsymbol{\xi }%
_{\ast }^{\prime }dI_{\ast }dI^{\prime }dI_{\ast }^{\prime }\text{.}
\label{QL1}
\end{equation}%
Here, the density of states $\varphi =\varphi \left( I\right)$---$\varphi
\left( I\right) dI$ representing the number of internal states between $I$
and $I+dI$---is defined to recover a proper form of the specific internal
energy~\cite{AMR-24, BBCG-25}. Typically, a power-law density of states 
\begin{equation}
\varphi \left( I\right) =I^{\delta /2-1}\text{,}  \label{PLDOS}
\end{equation}%
where $\delta >0$ represents the number of internal degrees of freedom of
the molecules, has been considered~\cite{BDLP-94, GP-23, Be-23b}.  From
quantum mechanical results~\cite{Anderson-03, Anderson-06, Atkins-10}, for a
rigid rotor, the power-law density of states $\left( \ref{PLDOS}\right) $
can be motivated for rotational energy~\cite{BBCG-25}, where, e.g. $\delta =2$ for linear molecules and $\delta =3$
for spherical tops (at least approximately). For a calorically perfect gas, the number of internal
degrees of freedom $\delta $ is constant and the power-law density of states~\eqref{PLDOS} may, at least for some purposes, be physically
relevant~\cite{DPT-21, BBCG-25}. For thermally perfect gases, the number of
internal degrees of freedom is depending on the temperature and other
densities of states may have to be considered to be able to capture those
gases. Another approach to capture thermally perfect gases is to introduce a
separate discrete (or, continuous as well) variable for the vibrational part of the internal
energy. For some purposes, it may also be satisfactory to apply the power-law
density of states~\eqref{PLDOS} for thermally perfect gases,
but with an average value for the number of internal degrees of freedom $\delta >0$. 

The main results in this paper are obtained for the power-law
density of states~\eqref{PLDOS}. Here and
below, the standard abbreviations%
\begin{align*}
f_{\ast } &=f\left( t,\mathbf{x}, \boldsymbol{\xi }_{\ast },I_{\ast }\right), &
f^{\prime } &=f\left( t,\mathbf{x},\boldsymbol{\xi }^{\prime},I^{\prime }\right), &
f_{\ast }^{\prime }&=f\left( t,\mathbf{x},\boldsymbol{\xi }_{\ast }^{\prime },I_{\ast }^{\prime }\right)\text{,} \\
\varphi _{\ast } &=\varphi \left( I_{\ast }\right)\text{,} &
\varphi^{\prime }&=\varphi \left( I^{\prime }\right)\text{,} &
\text{and }\varphi _{\ast}^{\prime }&=\varphi \left( I_{\ast }^{\prime }\right) \text{,}
\end{align*}%
are used. The transition probability $W=W(\boldsymbol{\xi },\boldsymbol{\xi }%
_{\ast },I,I_{\ast }\left\vert \boldsymbol{\xi }^{\prime },\boldsymbol{\xi }%
_{\ast }^{\prime },I^{\prime },I_{\ast }^{\prime }\right. )$ is of the form 
\cite{Be-23b} 
\begin{equation}
W=4m\varphi \left( I\right) \varphi \left( I_{\ast }\right)\sigma \frac{\left\vert \boldsymbol{\xi }%
-\boldsymbol{\xi }_{\ast }%
\right\vert }{\left\vert \boldsymbol{\xi }^{\prime }%
-\boldsymbol{\xi }_{\ast }^{\prime }\right\vert }\boldsymbol{\delta 
}_{3}\left( \boldsymbol{\xi }+\boldsymbol{\xi }_{\ast }-\boldsymbol{\xi }%
^{\prime }-\boldsymbol{\xi }_{\ast }^{\prime }\right) \boldsymbol{\delta }%
_{1}\left( E-E^{\prime }\right)\text{,}   \label{TP}
\end{equation}%
where $\boldsymbol{\delta }_{3}$ and $\boldsymbol{\delta }_{1}$ denote
Dirac's delta function in $\mathbb{R}^{3}$ and $\mathbb{R}$, respectively.
The scattering cross section $\sigma =\sigma \left( \left\vert \boldsymbol{%
\xi }-\boldsymbol{\xi }_{\ast }\right\vert ,\left\vert \cos \theta
\right\vert ,I,I_{\ast },I^{\prime },I_{\ast }^{\prime }\right) $, where the
scattering angle $\theta $ is given by $\cos \theta =\dfrac{\boldsymbol{\xi }%
-\boldsymbol{\xi }_{\ast }}{\left\vert \boldsymbol{\xi }-\boldsymbol{\xi }%
_{\ast }\right\vert }\cdot \dfrac{\boldsymbol{\xi }^{\prime }-\boldsymbol{%
\xi }_{\ast }^{\prime }}{\left\vert \boldsymbol{\xi }^{\prime }-\boldsymbol{%
\xi }_{\ast }^{\prime }\right\vert }$, is positive almost everywhere and
satisfies the microreversibility condition
\begin{equation}
\begin{aligned}
&\varphi \left( I\right) \varphi \left( I_{\ast }\right) \left\vert 
\boldsymbol{\xi }-\boldsymbol{\xi }_{\ast }\right\vert ^{2}\sigma \left(
\left\vert \boldsymbol{\xi }-\boldsymbol{\xi }_{\ast }\right\vert
,\left\vert \cos \theta \right\vert ,I,I_{\ast },I^{\prime },I_{\ast
}^{\prime }\right) \\
&\quad =\varphi \left( I^{\prime }\right) \varphi \left( I_{\ast }^{\prime
}\right) \left\vert \boldsymbol{\xi }^{\prime }-\boldsymbol{\xi }_{\ast
}^{\prime }\right\vert ^{2}\sigma \left( \left\vert \boldsymbol{\xi }%
^{\prime }-\boldsymbol{\xi }_{\ast }^{\prime }\right\vert ,\left\vert \cos
\theta \right\vert ,I^{\prime },I_{\ast }^{\prime },I,I_{\ast }\right) \text{%
,} \label{MRC}
\end{aligned}
\end{equation}
as well as the symmetry relations;%
\begin{equation}
\begin{aligned}
\sigma \left( \left\vert \boldsymbol{\xi }-\boldsymbol{\xi }_{\ast
}\right\vert ,\left\vert \cos \theta \right\vert ,I,I_{\ast },I^{\prime
},I_{\ast }^{\prime }\right)  &=\sigma \left( \left\vert \boldsymbol{\xi }-%
\boldsymbol{\xi }_{\ast }\right\vert ,\left\vert \cos \theta \right\vert
,I,I_{\ast },I_{\ast }^{\prime },I^{\prime }\right) \\
&=\sigma \left( \left\vert \boldsymbol{\xi }-\boldsymbol{\xi }_{\ast
}\right\vert ,\left\vert \cos \theta \right\vert ,I_{\ast },I,I_{\ast
}^{\prime },I^{\prime }\right) \text{.} \label{SR}
\end{aligned}
\end{equation}

Applying the Borgnakke--Larsen parametrization~\eqref{BLP}
\cite{BDLP-94, GP-23} the collision operator is recast as 
\begin{align*}
&Q(f,f)=Q_{\delta }(f,f) \\
&\quad =\int_{\mathbb{R}^{3}\times \mathbb{R}_{+}\times \lbrack 0,1]^{2}\mathbb{%
\times S}^{2}}\sigma \left\vert \boldsymbol{\xi }-\boldsymbol{\xi }_{\ast
}\right\vert \left( f^{\prime }f_{\ast }^{\prime }\frac{\varphi \varphi
_{\ast }}{\varphi ^{\prime }\varphi _{\ast }^{\prime }}-ff_{\ast }\right)
\left( 1-R\right) E^{2}d\boldsymbol{\sigma \,}dr\boldsymbol{\,}dR\boldsymbol{%
\,}d\boldsymbol{\xi }_{\ast }\boldsymbol{\,}dI_{\ast }\text{.}
\end{align*}%
In particular, for a power-law density of states~\eqref{PLDOS}
the collision operator becomes~\cite{Be-23b, BDLP-94} 
\begin{equation*}
Q_{\delta }(f,f)=\int_{\mathbb{R}^{3}\times \mathbb{R}_{+}\times \lbrack
0,1]^{2}\mathbb{\times S}^{2}}B_{\delta }\left( \frac{f^{\prime }f_{\ast
}^{\prime }}{\left( I^{\prime }I_{\ast }^{\prime }\right) ^{\delta /2-1}}-%
\frac{ff_{\ast }}{\left( II_{\ast }\right) ^{\delta /2-1}}\right) dA_{\delta
}
\end{equation*}%
for the measure%
\begin{equation*}
dA_{\delta }=\left( r\left( 1-r\right) \right) ^{\delta /2-1}\left(
1-R\right) ^{\delta -1}R^{1/2}\left( II_{\ast }\right) ^{\delta /2-1}d%
\boldsymbol{\sigma \,}dr\boldsymbol{\,}dR\boldsymbol{\,}d\boldsymbol{\xi }%
_{\ast }\boldsymbol{\,}dI_{\ast }
\end{equation*}%
and the collision kernel 
\begin{equation}
B_{\delta }=\frac{\sigma \left\vert \boldsymbol{\xi }-\boldsymbol{\xi }%
_{\ast }\right\vert E^{2}}{\left( 1-R\right) ^{\delta -2}R^{1/2}\left(
r\left( 1-r\right) \right) ^{\delta /2-1}}\text{.}  \notag
\end{equation}%
The collision kernel $B_{\delta }=B_{\delta }(\boldsymbol{\xi },\boldsymbol{%
\xi }_{\ast },I,I_{\ast },r,R,\boldsymbol{\sigma })$ satisfies, by relations~\eqref{MRC} and~\eqref{SR}, the symmetry and
microreversibility relations%
\begin{align*}
B_{\delta }(\boldsymbol{\xi },\boldsymbol{\xi }_{\ast },I,I_{\ast },r,R,%
\boldsymbol{\sigma }) &=B_{\delta }(\boldsymbol{\xi }_{\ast },\boldsymbol{%
\xi },I_{\ast },I,r,R,\boldsymbol{\sigma })\text{,} \\
B_{\delta }(\boldsymbol{\xi },\boldsymbol{\xi }_{\ast },I,I_{\ast },r,R,%
\boldsymbol{\sigma }) &=B_{\delta }(\boldsymbol{\xi },\boldsymbol{\xi }%
_{\ast },I,I_{\ast },1-r,R,-\boldsymbol{\sigma })\text{,} \\
B_{\delta }(\boldsymbol{\xi },\boldsymbol{\xi }_{\ast },I,I_{\ast },r,R,%
\boldsymbol{\sigma }) &=B_{\delta }(\boldsymbol{\xi }^{\prime },\boldsymbol{%
\xi }_{\ast }^{\prime },I^{\prime },I_{\ast }^{\prime },r^{\prime
},R^{\prime },\boldsymbol{\sigma }^{\prime })\text{,}
\end{align*}%
with%
\begin{equation*}
r^{\prime }=\frac{I}{I+I_{\ast }}\text{, }R^{\prime }=\frac{m\left\vert 
\boldsymbol{\xi }-\boldsymbol{\xi }_{\ast }\right\vert ^{2}}{4E}\text{ , and 
}\boldsymbol{\sigma }^{\prime }=\frac{\boldsymbol{\xi }-\boldsymbol{\xi }%
_{\ast }}{\left\vert \boldsymbol{\xi }-\boldsymbol{\xi }_{\ast }\right\vert }%
\text{.}
\end{equation*}%
Below we consider only power-law densities of states $\left( \ref{PLDOS}%
\right) $.

\subsection{Macroscopic quantities}

Macroscopic quantities, i.e., the number density of molecules $n$,
the mass density $\rho $, the flow velocity $\mathbf{u}=\left(
u_{1},u_{2},u_{3}\right) $, the temperature $T$, and the pressure $p$ are
defined by%
\begin{align*}
n &=\left( 1,f\right) \text{, }\rho =mn=\left( m,f\right) \text{, }u_{i}=%
\frac{1}{n}\left( \xi _{i},f\right) \text{, }i=1,2,3\text{, } \\
T &=\frac{2}{\left(3+\delta\right)nk_{B}}\left( \frac{m}{2}\left\vert \boldsymbol{\xi }-\mathbf{u}%
\right\vert ^{2}+I,f\right) \text{, and }p=nk_{B}T=\frac{2}{3+\delta}\left(\frac{m}{2}
\left\vert \boldsymbol{\xi }-\mathbf{u}\right\vert ^{2}+I,f\right) \text{,}
\end{align*}%
where $k_{B}$ denotes the Boltzmann constant. The ratio of specific heats
(also known as the heat capacity ratio or adiabatic index) is 
\begin{equation*}
\gamma =\dfrac{5+\delta }{3+\delta }\text{.}
\end{equation*}%
Especially, for a monatomic gas the ratio of specific heats is $\gamma = 5/3$, while for a diatomic gas $\gamma = 7/5$.

In the hydrodynamic limit, or, when the Knudsen number tends to zero, the
overall evolution of the macroscopic quantities is governed by the compressible Euler equations \cite{Go-05} (in the absence of external forces)%
\begin{align*}
\frac{\partial \rho }{\partial t}+\nabla _{\mathbf{x}}\cdot \left( \rho 
\mathbf{u}\right) &=0\text{,} \\
\frac{\partial \mathbf{u}}{\partial t}+\left( \mathbf{u}\cdot \nabla _{%
\mathbf{x}}\right) \mathbf{u}+\frac{1}{\rho }\nabla _{\mathbf{x}}p &=0\text{,} \\
\frac{\partial }{\partial t}\left( \rho \frac{\left\vert \mathbf{u}%
\right\vert ^{2}}{2}+\frac{1}{\gamma -1}p\right) +\nabla _{\mathbf{x}}\cdot
\left( \rho \frac{\left\vert \mathbf{u}\right\vert ^{2}}{2}+\frac{\gamma }{%
\gamma -1}p\right) \mathbf{u} &=0\text{,}
\end{align*}%
which can be obtained through Chapman--Enskog expansion of the Boltzmann
equation $\left( \ref{BE1}\right) $ for a power-law density of states $%
\left( \ref{PLDOS}\right) $, cf. \cite{BBBD-18}.

The characteristics of the corresponding one-dimensional Euler system are $\left\{
u-c,u,u+c\right\} $, where%
\begin{equation*}
c=\sqrt{\frac{\gamma p}{\rho }}=\sqrt{\frac{\gamma k_{B}T}{m}}
\end{equation*}%
denotes the speed of sound.

\subsection{Main properties of the collision operator\label{MPCO}}

In this section, we recall some main properties of the collision operator~\eqref{QL1}, \eqref{TP}.
In fact, those properties are of main importance later on.

There are five conservation laws \cite{BDLP-94}%
\begin{equation}
\int_{\mathbb{R}^{3}\times \mathbb{R}_{+}}\psi \left( \boldsymbol{\xi }%
\mathbf{,}I\right) Q_{\delta }(f,f)\mathbf{\,}d\boldsymbol{\xi }\mathbf{\,}%
dI=0\text{ for }\psi \left( \boldsymbol{\xi }\mathbf{,}I\right) \in \left\{
1,\xi _{1},\xi _{2},\xi _{3},m\left\vert \boldsymbol{\xi }\right\vert
^{2}+2I\right\}\text{.}  \label{conslaw}
\end{equation}%
The collision operator satisfies the $\mathcal{H}$-theorem \cite{BDLP-94,
De-97, Be-23b}, which states that%
\begin{equation}
\int_{\mathbb{R}^{3}\times \mathbb{R}_{+}}Q_{\delta }(f,f)\log \left(
I^{1-\delta /2}f\right) \mathbf{\,}d\boldsymbol{\xi }\mathbf{\,}dI\leq 0%
\text{,}  \label{HT}
\end{equation}%
with equality in inequality $\left( \ref{HT}\right) $ if and only if 
\begin{equation*}
Q_{\delta }(f,f)=0\text{,}
\end{equation*}%
or, if and only if there exist $n\geq 0$, $\mathbf{u}\in \mathbb{R}^{3}$,
and $T>0$, such that for almost every $\left( \boldsymbol{\xi },I\right) \in 
\mathbb{R}^{3}\times \mathbb{R}_{+}$%
\begin{equation*}
f=M\left( \boldsymbol{\xi },I\right) =\frac{m^{3/2}nI^{\delta /2-1}}{\left(
2\pi \right) ^{3/2}\Gamma \left( \delta /2\right) \left( k_{B}T\right)
^{\left( 3+\delta \right) /2}}\exp \left( -\frac{m(|\boldsymbol{\xi }-%
\mathbf{u}|^{2}+2I)}{2k_{B}T}\right) \text{,}
\end{equation*}%
where $\Gamma $ is the usual Gamma function, is a Maxwellian distribution.

\section{Half-space problem of evaporation and condensation \label{HSP}}

We consider the stationary Boltzmann equation in one spatial dimension.
That is, $f$ depends only on one space variable, henceforth denoted by $x>0$,
but on three velocity variables $\boldsymbol{\xi }=\left( \xi _{1},\xi_{2},\xi _{3}\right) $. Then
\begin{equation}
\xi _{1}\dfrac{\partial f}{\partial x}=Q_{\delta }(f,f),  \label{HSP1}
\end{equation}%
where $f=f(x,\boldsymbol{\xi },I)$ represents the distribution function for
molecules at position $x\in \mathbb{R}_{+}$, with velocity $\boldsymbol{\xi }%
\in \mathbb{R}^{3}$ and internal energy $I\in \mathbb{R}_{+}$, and $\delta
>0 $ denotes the number of internal degrees of freedom.

\subsection{Boundary conditions}

Introduce the notation (where $f=f(\mathbf{\xi })$ possibly can depend on
more variables than $\mathbf{\xi }\in \mathbb{R}^{3}$) 
\begin{equation*}
f_{\pm }(\boldsymbol{\xi })=f_{\pm }(\xi _{1},\xi _{2},\xi _{3})=f(\pm \xi
_{1},\xi _{2},\xi _{3})\hspace*{3mm}\text{for }\xi _{1}>0\text{.}
\end{equation*}%
Assuming complete absorption with a non-drifting incoming Maxwellian distribution $M_{0+}$
and that an
equilibrium distribution $M_{\infty}$ is approached at the far end, we obtain the
boundary conditions%
\begin{equation}
f_{+}(0,\boldsymbol{\xi }\mathbf{,}I)=M_{0+}\text{ and } 
f(x,\boldsymbol{\xi }\mathbf{\mathbf{,}}I)\rightarrow M_{\infty } \text{ as $x\rightarrow \infty$,}  \label{BC}
\end{equation}%
in which
\begin{align*}
M_{0}&=\frac{m^{3/2}n_{0}I^{\delta
/2-1}}{\left( 2\pi \right) ^{3/2}\Gamma \left( \delta /2\right) \left(
k_{B}T_{0}\right) ^{\left( 3+\delta \right) /2}}\exp \left( -\frac{m(|%
\boldsymbol{\xi }|^{2}+2I)}{2k_{B}T_{0}}\right) \text{,} 
\notag \\
M_{\infty } &=\frac{%
m^{3/2}nI^{\delta /2-1}}{\left( 2\pi \right) ^{3/2}\Gamma \left( \delta
/2\right) \left( k_{B}T_{\infty }\right) ^{\left( 3+\delta \right) /2}}\exp
\left( -\frac{m(|\boldsymbol{\xi }-\mathbf{u}_{\infty }|^{2}+2I)}{%
2k_{B}T_{\infty }}\right)\text{,}
\end{align*}%
where we assume that $\mathbf{u}_{\infty }=\left( u,0,0\right) $. The Mach
number at the far end is defined as%
\begin{equation*}
\boldsymbol{\mathcal{M}}_{\infty }=\dfrac{\mathbf{u}_{\infty }}{c}=\left( 
\mathcal{M}_{\infty },0,0\right) \text{, with }\mathcal{M}_{\infty }=\dfrac{u%
}{c}=\sqrt{\dfrac{m}{\gamma k_{B}T_{\infty }}}u\text{.}
\end{equation*}

In Section~\ref{NCBD}, we consider some necessary conditions on the
relations on the boundary data for the possible existence of solutions to the half-space
problem of evaporation and condensation based on entropy inequalities. However, before doing so, we consider a linearized problem in Section~\ref{sec:LHSP}, because we can develop a rigorous theory for the classification of the solution, which is related to that in the original nonlinear problem.

\subsection{Linearized half-space problem\label{sec:LHSP}}
This section concerns a theoretical analysis of a linearized problem, before returning to consider the original nonlinear problem~\eqref{HSP1}, \eqref{BC} in the next section.

After a shift in the velocity variable---$\boldsymbol{\xi }\rightarrow 
\boldsymbol{\xi }+\mathbf{u}_{\infty }$---considering deviations of the far equilibrium
distribution $M=M\left( \boldsymbol{\xi },I\right) =M_{\infty }\left( 
\boldsymbol{\xi }+\mathbf{u}_{\infty },I\right) $ of the form $f=M+\sqrt{M}F$,
discarding quadratic terms---possibly adding an inhomogeneity $%
S=S(x,\boldsymbol{\xi },I)$, such that $( \sqrt{M}S,\psi ) =0$
for $\psi \in \left\{ 1,\xi _{1},\xi _{2},\xi _{3},m\left\vert \boldsymbol{%
\xi }\right\vert ^{2}+2I\right\}$---the system $\left( \ref{HSP1}\right) $
with boundary conditions~\eqref{BC} can be recast as%
\begin{equation}
\begin{cases}
\left( \xi _{1}+u\right) \dfrac{\partial F}{\partial x}+\mathcal{L}%
F=S \\ 
F(0,\boldsymbol{\xi },I)=F_{0}\left( \boldsymbol{\xi },I\right) & \text{ for $\xi _{1}+u>0$} \\ 
F(x,\boldsymbol{\xi }\mathbf{\mathbf{,}}I)\rightarrow 0 & \text{ as $x\rightarrow \infty$,}
\end{cases}%
\label{LHSP}
\end{equation}%
with $F_{0}\left( \boldsymbol{\xi ,}I\right) =M^{-1/2}M_{0}\left( 
\boldsymbol{\xi }+\mathbf{u}_{\infty },I\right) -\sqrt{M}$ and $\mathcal{L}%
F=-2M^{-1/2}Q_{\delta }\left( M,\sqrt{M}F\right) $. For $\delta \geq 2$,
with a scattering cross section of the form
\begin{equation}
\sigma =\sigma _{\delta }=C\dfrac{\left\vert \boldsymbol{\xi }^{\prime }-%
\boldsymbol{\xi }_{\ast }^{\prime }\right\vert }{\left\vert \boldsymbol{\xi }%
-\boldsymbol{\xi }_{\ast }\right\vert }E^{\left( \zeta -1\right) /2-\delta
}\left( I^{\prime }I_{\ast }^{\prime }\right) ^{\delta /2-1}\text{,}
\label{CS}
\end{equation}
or, equivalently, with a collision kernel of the form%
\begin{equation}
B_{\delta }=CE^{\zeta /2}  \label{CK}
\end{equation}%
for some $0\leq \zeta \leq 2$ and some positive constant $C>0$, the
linearized operator $\mathcal{L}$ is an unbounded--for $\zeta >0$%
---self-adjoint nonnegative Fredholm operator \cite{Be-23b} with the domain $%
\mathrm{D}(\mathcal{L})=L^{2}\left( \left( 1+\left\vert \boldsymbol{\xi }%
\right\vert +\sqrt{I}\right) ^{\zeta }d\boldsymbol{\xi \,}\mathbf{\,}%
dI\right) $ \cite{Be-23b,DL-23,Be-24a}, while if $1\leq \zeta \leq 2$ there
is also a positive constant $\widetilde{C}>0$, such that $\left( h,\mathcal{L%
}h\right) \geq \widetilde{C}\left( h,\left( 1+\left\vert \boldsymbol{\xi }%
\right\vert \right) h\right) $ for all $h\in \mathrm{D}(\mathcal{L})\cap 
\mathrm{\rm{Im}}(\mathcal{L})$ \cite{Be-23b}.

An orthonormal basis--also orthogonal with respect to the quadratic form 
$\left( \left. \cdot \right\vert \left( \xi _{1}+u\right) \cdot \right)$--of
the kernel of the linearized operator $\mathcal{L}$ is \cite{Be-23d} 
\begin{equation*}
\begin{cases}
\phi _{1}=\sqrt{\dfrac{M}{2p}}\left( \dfrac{\sqrt{\rho }}{\sqrt{\left(
3+\delta \right) \left( 5+\delta \right) p}}\left( \left\vert \boldsymbol{%
\xi }\right\vert ^{2}+\dfrac{2}{m}I\right) +\xi _{1}\right) \smallskip \\ 
\phi _{2}=\sqrt{\dfrac{M}{2p}}\sqrt{5+\delta }\left( \dfrac{\rho }{\left(
5+\delta \right) p}\left( \left\vert \boldsymbol{\xi }\right\vert ^{2}+%
\dfrac{2}{m}I\right) -1\right) \smallskip \\ 
\phi _{3}=\sqrt{\dfrac{M}{p}}\xi _{2}\smallskip \\ 
\phi _{4}=\sqrt{\dfrac{M}{p}}\xi _{3}\smallskip \\ 
\phi _{5}=\sqrt{\dfrac{M}{2p}}\left( \dfrac{\sqrt{\rho }}{\sqrt{\left(
3+\delta \right) \left( 5+\delta \right) p}}\left( \left\vert \boldsymbol{%
\xi }\right\vert ^{2}+\dfrac{2}{m}I\right) -\xi _{1}\right) \text{.}%
\end{cases}
\end{equation*}%
Denote%
\begin{align*}
\Omega _{+}^{u} &:=\left\{ \phi _{i}\left\vert \left( \left. \phi
_{i}\right\vert \left( \xi _{1}+u\right) \phi _{i}\right) >0\right. \right\}
=\left\{ \phi _{1},...,\phi _{k^{+}}\right\} , \\
\Omega _{0}^{u} &:=\left\{ \phi _{i}\left\vert \left( \left. \phi
_{i}\right\vert \left( \xi _{1}+u\right) \phi _{i}\right) =0\right. \right\}
=\left\{ \phi _{k^{+}+1},...,\phi _{k^{+}+l}\right\}\text{.}
\end{align*}%
Then $\left( k^{+},5-k^{+}-l,l\right) $ is the signature of the restriction
of the quadratic form $\left( \left. \cdot \right\vert \left( \xi
_{1}+u\right) \cdot \right) $ to the kernel of $\mathcal{L}$. Here the
numbers $k^{+}$ and $l$ are the number of positive and zero numbers among $%
\{u-c,u,u,u,u+c\}$, respectively. More precisely,
\begin{equation*}
\Omega _{0}^{-c}=\left\{ \phi _{1}\right\} \text{, }\Omega _{0}^{0}=\left\{
\phi _{2},\phi _{3},\phi _{4}\right\} \text{, and }\Omega _{0}^{c}=\left\{
\phi _{5}\right\} \text{,}
\end{equation*}%
while $\Omega _{0}^{u}=\emptyset $ if $u\notin \left\{ 0,\pm c\right\} $,
and 
\begin{align*}
\Omega _{+}^{u} &=\emptyset \text{ if }u\leq -c\text{; }&
\Omega_{+}^{u}&=\left\{ \phi _{1}\right\} \text{ if }-c<u\leq 0;\text{ } \\
\Omega _{+}^{u} &=\left\{ \phi _{1},\phi _{2},\phi _{3},\phi _{4}\right\} 
\text{ if }0<u\leq c; &
\text{and }\Omega _{+}^{u}&=\left\{ \phi _{1},\phi_{2},\phi _{3},\phi _{4},\phi _{5}\right\} \text{ if }u>c.
\end{align*}%
Consider now the scattering cross section~\eqref{CS} for $%
1\leq \zeta \leq 2$. Moreover, assume that $F_{0+}\in \mathcal{\mathfrak{h}}%
_{+}\cap \mathrm{D}(\mathcal{L})$, with $\mathcal{\mathfrak{h}}_{+}:=\left. 
\mathcal{\mathfrak{h}}\right\vert _{\xi _{1}+u>0}$, and that $e^{\eta x}S(x,%
\boldsymbol{\xi }\mathbf{\mathbf{,}}I)\in L^{2}\left( \mathbb{R}_{+};%
\mathcal{\mathfrak{h}}\right) $ for some positive number $\eta >0$.

\begin{theorem}
\label{T1}\cite{Be-23d} Under the assumptions stated above, imposing $k^{+}+l$ conditions on $F_{0}$, there
exists a unique solution $F=F(x,\boldsymbol{\xi }\mathbf{\mathbf{,%
}}I)$ to the problem $\left( \ref{LHSP}\right) $, such that $e^{\mu x}F(x,%
\boldsymbol{\xi }\mathbf{\mathbf{,}}I)\in L^{2}\left( \mathbb{R}_{+};%
\mathcal{\mathfrak{h}}\right) $ for some $\mu >0$.
\end{theorem}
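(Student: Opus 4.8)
The plan is to treat \eqref{LHSP} as a first-order evolution problem in $x$ and to reduce its solvability to a spectral analysis of the operator pencil attached to the homogeneous equation $\left(\xi_{1}+u\right)\partial_{x}F+\mathcal{L}F=0$. Substituting $F=e^{-\lambda x}\psi$ turns this into the generalized eigenvalue problem $\mathcal{L}\psi=\lambda\left(\xi_{1}+u\right)\psi$, whose modes with $\operatorname{Re}\lambda>0$ span the space of solutions decaying as $x\to\infty$. Once that decaying subspace is understood, the inhomogeneity---which satisfies $e^{\eta x}S\in L^{2}(\mathbb{R}_{+};\mathfrak{h})$---is reinstated through a Duhamel/variation-of-constants formula over the decaying modes, producing a particular solution $F_{p}$ with $e^{\mu_{0}x}F_{p}\in L^{2}$ for some $0<\mu_{0}<\eta$; it therefore suffices to treat the homogeneous problem with appropriately modified incoming data.

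Second, I would analyze this pencil using the structure of $\mathcal{L}$ recalled above. Pairing the relation with $\psi$ gives $\lambda\left(\psi,\left(\xi_{1}+u\right)\psi\right)=\left(\psi,\mathcal{L}\psi\right)$; since $\xi_{1}+u$ is a real multiplier, both sides are real, so every mode outside $\ker\mathcal{L}$ has a real $\lambda$ whose sign coincides with that of the indefinite form $\left(\psi,\left(\xi_{1}+u\right)\psi\right)$. For $1\le\zeta\le2$ the coercivity estimate $\left(\psi,\mathcal{L}\psi\right)\ge\widetilde{C}\left(\psi,\left(1+\left\vert\boldsymbol{\xi}\right\vert\right)\psi\right)$ on $\mathrm{D}(\mathcal{L})\cap\mathrm{Im}(\mathcal{L})$ then bounds these positive $\lambda$ away from $0$, delivering a uniform decay rate---hence the weighted bound $e^{\mu x}F\in L^{2}$ claimed in the statement---while also controlling the nonzero spectrum of the pencil on the complement of $\ker\mathcal{L}$. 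The value $\lambda=0$ contributes the five-dimensional $\ker\mathcal{L}$; the associated $x$-dependent hydrodynamic solutions are constant or polynomial in $x$, and which of them are admissible as decaying data is governed by the signature $\left(k^{+},5-k^{+}-l,l\right)$ of $\left(\left.\cdot\right\vert\left(\xi_{1}+u\right)\cdot\right)$ on $\ker\mathcal{L}$.

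Third, for uniqueness and the counting I would exploit the indefinite energy $\langle F,F\rangle_{u}:=\left(F,\left(\xi_{1}+u\right)F\right)$, which along homogeneous solutions obeys $\tfrac{d}{dx}\langle F,F\rangle_{u}=-2\left(F,\mathcal{L}F\right)\le0$. If $G$ is a decaying solution whose incoming trace vanishes on $\left\{\xi_{1}+u>0\right\}$, then $\langle G(0),G(0)\rangle_{u}\le0$ from the boundary term, while monotonicity together with $\langle G,G\rangle_{u}\to0$ forces $\langle G(0),G(0)\rangle_{u}\ge0$; hence $\int_{0}^{\infty}\left(G,\mathcal{L}G\right)dx=0$, so $G(x)\in\ker\mathcal{L}$ for every $x$. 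This reduces the indeterminacy to a finite-dimensional residue supported on the non-decaying directions of $\ker\mathcal{L}$, namely those in $\Omega_{+}^{u}\cup\Omega_{0}^{u}$, of dimension exactly $k^{+}+l$. Imposing $k^{+}+l$ linear conditions on $F_{0}$ annihilates this residue, yielding uniqueness; existence then follows by prescribing the incoming data on the decaying spectral subspace and adjusting the same $k^{+}+l$ hydrodynamic amplitudes to meet the conditions.

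The main obstacle is that $\xi_{1}+u$ changes sign and vanishes on the hyperplane $\xi_{1}=-u$, so $\left(\xi_{1}+u\right)^{-1}\mathcal{L}$ is neither bounded nor self-adjoint and generates no semigroup in the ordinary sense; the construction of the decaying subspace and of $F_{p}$ must be carried out in the indefinite-metric (Krein-space) setting induced by $\langle\cdot,\cdot\rangle_{u}$, with the unboundedness of $\mathcal{L}$ for $\zeta>0$ handled through the domain $\mathrm{D}(\mathcal{L})$ and the coercivity estimate. The most delicate point is the resonant cases $u\in\left\{0,\pm c\right\}$, where $\Omega_{0}^{u}\neq\emptyset$ and the form $\left(\left.\cdot\right\vert\left(\xi_{1}+u\right)\cdot\right)$ degenerates on part of $\ker\mathcal{L}$: there generalized (Jordan-chain) hydrodynamic modes appear at $\lambda=0$, and determining whether each of them grows, decays, or stays bounded is exactly what fixes the contribution $l$ to the count and is where the argument requires the greatest care.
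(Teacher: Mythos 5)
First, a point of reference: the paper does not actually prove Theorem~\ref{T1}; it is quoted verbatim from \cite{Be-23d}, so there is no in-paper argument to compare against line by line. Measured against the proof in that reference (and the standard rigorous treatments of linear kinetic half-space problems), your proposal gets the right invariants --- the signature $\left(k^{+},5-k^{+}-l,l\right)$ of $\left(\cdot\,|\,(\xi_{1}+u)\cdot\right)$ on $\ker\mathcal{L}$ as the source of the $k^{+}+l$ conditions, the monotone indefinite energy $\tfrac{d}{dx}\left(F,(\xi_{1}+u)F\right)=-2\left(F,\mathcal{L}F\right)\leq 0$ as the uniqueness mechanism, and the special role of $u\in\{0,\pm c\}$ --- but the existence mechanism you propose does not go through.

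The gap is in steps one and two: you build the solution by expanding over eigenmodes of the pencil $\mathcal{L}\psi=\lambda(\xi_{1}+u)\psi$ with $\operatorname{Re}\lambda>0$ and then applying Duhamel over these modes. For the continuous-velocity linearized Boltzmann operator this pencil has no complete system of discrete eigenmodes: writing $\mathcal{L}=\nu(\boldsymbol{\xi},I)-K$ with $K$ compact, the multiplication operator $\nu/(\xi_{1}+u)$ already contributes essential spectrum filling unbounded real intervals (indeed all of $\mathbb{R}\setminus\{0\}$ near the plane $\xi_{1}=-u$), so ``the modes with $\operatorname{Re}\lambda>0$'' do not span the decaying solutions, and the variation-of-constants formula over them cannot be written down. (Your reality argument is also incomplete: $\lambda\left(\psi,(\xi_{1}+u)\psi\right)=\left(\psi,\mathcal{L}\psi\right)$ only forces $\lambda\in\mathbb{R}$ when the indefinite form does not vanish on $\psi$.) This is precisely why the proof in \cite{Be-23d} --- in the tradition of Bardos--Caflisch--Nicolaenko, Ukai--Yang--Yu, Golse, and Bernhoff's earlier work --- does not diagonalize the pencil but instead introduces a \emph{damped} (penalized) equation, adding to $\mathcal{L}$ a finite-rank damping term built from projections onto $\Omega_{+}^{u}\cup\Omega_{0}^{u}$, solves the damped problem with exponential decay by a variational/Galerkin argument resting on the coercivity $\left(h,\mathcal{L}h\right)\geq\widetilde{C}\left(h,(1+|\boldsymbol{\xi}|)h\right)$, and then shows that the damping terms vanish --- i.e.\ the damped solution solves the original problem --- exactly when $F_{0}$ satisfies $k^{+}+l$ linear conditions. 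A secondary slip: the $k^{+}+l$ conditions are what guarantee \emph{existence} of a decaying solution; uniqueness follows from your energy argument alone and does not require them.
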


\begin{corollary}
\label{C1}\cite{Be-23d} Under the assumptions of Theorem \ref{T1} and if the $%
k^{-}=5-k^{+}-l$ parameters $\left( F_{\infty }\mid \phi_{k^{+}+l+1}\right)$,
\ldots, $\left( F_{\infty }\mid \phi_{5}\right) $ are prescribed, then there exists a unique solution $F=F(x,\boldsymbol{%
\xi }\mathbf{\mathbf{,}}I)$ to the problem $\left( \ref{LHSP}\right) $, such
that $e^{\mu x}\left( F(x,\boldsymbol{\xi }\mathbf{\mathbf{,}}I)-F_{\infty
}\right) \in L^{2}\left( \mathbb{R}_{+};\mathcal{\mathfrak{h}}\right) ,\;$%
with $F_{\infty }=\,\underset{x\rightarrow \infty }{\lim }F(x,\boldsymbol{%
\xi }\mathbf{\mathbf{,}}I)\in \ker \mathcal{L}$, for some $\mu >0$.
\end{corollary}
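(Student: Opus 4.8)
The plan is to deduce Corollary~\ref{C1} from Theorem~\ref{T1} by a \emph{shift} of the unknown by an element of $\ker \mathcal{L}$. Since $F_{\infty }\in \ker \mathcal{L}$ does not depend on $x$, it satisfies $\left( \xi _{1}+u\right) \partial _{x}F_{\infty }+\mathcal{L}F_{\infty }=0$; hence, setting $\widetilde{F}:=F-F_{\infty }$, the function $\widetilde{F}$ solves the same differential equation $\left( \xi _{1}+u\right) \partial _{x}\widetilde{F}+\mathcal{L}\widetilde{F}=S$ as $F$. The boundary conditions transform accordingly: the incoming trace at $x=0$ becomes $\widetilde{F}_{0}:=F_{0}-F_{\infty }$ on $\xi _{1}+u>0$, and the far-field condition $F\rightarrow F_{\infty }$ becomes $\widetilde{F}\rightarrow 0$ as $x\rightarrow \infty $. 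Thus $\widetilde{F}$ is required to solve precisely the problem~\eqref{LHSP} with incoming data $\widetilde{F}_{0}$. Since $F_{\infty }\in \ker \mathcal{L}\subset \mathrm{D}(\mathcal{L})$ and $F_{0}\in \mathcal{\mathfrak{h}}_{+}\cap \mathrm{D}(\mathcal{L})$ by hypothesis, the shifted data $\widetilde{F}_{0}$ again belongs to $\mathcal{\mathfrak{h}}_{+}\cap \mathrm{D}(\mathcal{L})$, so the hypotheses of Theorem~\ref{T1} hold for the shifted problem.

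I would then count the degrees of freedom. Expanding $F_{\infty }=\sum_{i=1}^{5}a_{i}\phi _{i}$ in the orthonormal basis of $\ker \mathcal{L}$, the $k^{-}=5-k^{+}-l$ coefficients $a_{i}=\left( F_{\infty }\mid \phi _{i}\right) $ for $i=k^{+}+l+1,\ldots ,5$---those attached to the negative directions of the quadratic form $\left( \left. \cdot \right\vert \left( \xi _{1}+u\right) \cdot \right) $---are prescribed, while the $k^{+}+l$ coefficients $a_{1},\ldots ,a_{k^{+}+l}$ remain free. On the other hand, Theorem~\ref{T1} produces a unique exponentially decaying $\widetilde{F}$ only after $k^{+}+l$ scalar conditions are imposed on the incoming trace $\widetilde{F}_{0}=F_{0}-F_{\infty }$. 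The key idea is to spend the $k^{+}+l$ free coefficients of $F_{\infty }$ to meet exactly these $k^{+}+l$ conditions. Substituting $\widetilde{F}_{0}=F_{0}-\sum_{i=1}^{k^{+}+l}a_{i}\phi _{i}-\sum_{i=k^{+}+l+1}^{5}a_{i}\phi _{i}$ into the conditions yields a square linear system of size $k^{+}+l$ for the unknowns $a_{1},\ldots ,a_{k^{+}+l}$, whose right-hand side is determined by $F_{0}$ and the prescribed negative coefficients; note that the counts match exactly, $\left( k^{+}+l\right) +k^{-}=5=\dim \ker \mathcal{L}$.

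The main obstacle is the invertibility of this square system, i.e., showing that the linear map sending $\left( a_{1},\ldots ,a_{k^{+}+l}\right) $ to the left-hand sides of the $k^{+}+l$ conditions of Theorem~\ref{T1} is nonsingular. I would establish this by writing the conditions out explicitly and exploiting the double orthogonality of $\left\{ \phi _{i}\right\} $---orthonormal in $\mathcal{\mathfrak{h}}$ and orthogonal with respect to the form $\left( \left. \cdot \right\vert \left( \xi _{1}+u\right) \cdot \right) $. For the $k^{+}$ positive modes $\phi _{i}\in \Omega _{+}^{u}$ the relevant entries are controlled by the nonzero values $\left( \left. \phi _{i}\right\vert \left( \xi _{1}+u\right) \phi _{i}\right) >0$, which I expect to render the corresponding block nondegenerate. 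The delicate part is the $l$ zero modes in $\Omega _{0}^{u}$, on which the quadratic form vanishes; here I would need a finer analysis of how the solution operator of~\eqref{LHSP} couples these modes to the imposed conditions in order to conclude that the full $\left( k^{+}+l\right) \times \left( k^{+}+l\right) $ matrix is invertible. Once invertibility is secured, the free coefficients $a_{1},\ldots ,a_{k^{+}+l}$---and hence $F_{\infty }$---are uniquely determined, both existence and uniqueness following from nonsingularity, since any admissible solution must have a far-field limit whose free components solve the same square system. Theorem~\ref{T1} then yields a unique $\widetilde{F}$ with $e^{\mu x}\widetilde{F}\in L^{2}\left( \mathbb{R}_{+};\mathcal{\mathfrak{h}}\right) $, and $F:=\widetilde{F}+F_{\infty }$ is the asserted solution with $e^{\mu x}\left( F-F_{\infty }\right) \in L^{2}\left( \mathbb{R}_{+};\mathcal{\mathfrak{h}}\right) $ and $\lim_{x\rightarrow \infty }F=F_{\infty }\in \ker \mathcal{L}$.
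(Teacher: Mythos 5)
The paper does not actually prove Corollary~\ref{C1}: both Theorem~\ref{T1} and the corollary are quoted from \cite{Be-23d}, so your proposal can only be measured against the standard derivation in that reference. Your strategy---shift by $F_{\infty }\in \ker \mathcal{L}$, observe that $\widetilde{F}=F-F_{\infty }$ solves the same equation with incoming data $F_{0}-F_{\infty }$ and must decay to zero, and then spend the $k^{+}+l$ unprescribed coefficients $a_{1},\dots ,a_{k^{+}+l}$ of $F_{\infty }$ to satisfy the $k^{+}+l$ solvability conditions of Theorem~\ref{T1}---is exactly that standard route, and the bookkeeping $\left( k^{+}+l\right) +k^{-}=5=\dim \ker \mathcal{L}$ is correct.

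However, you leave open precisely the step that carries all the content: the invertibility of the $\left( k^{+}+l\right) \times \left( k^{+}+l\right) $ system, equivalently the injectivity of the map sending $\psi \in \mathrm{span}\left\{ \phi _{1},\dots ,\phi _{k^{+}+l}\right\} $ to the vector of solvability functionals evaluated at $\left. \psi \right\vert _{\xi _{1}+u>0}$. Dimension counting cannot give this, and your remark that the positive block should be nondegenerate because $\left( \left. \phi _{i}\right\vert \left( \xi _{1}+u\right) \phi _{i}\right) >0$ is an expectation, not an argument. The step is closed by the energy identity $\tfrac{d}{dx}\left( \left. G\right\vert \left( \xi _{1}+u\right) G\right) =-2\left( \left. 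G\right\vert \mathcal{L}G\right) \leq 0$ along solutions of the homogeneous equation: if a nonzero $\psi \in \mathrm{span}\left\{ \phi _{1},\dots ,\phi _{k^{+}+l}\right\} $ had incoming trace admissible for a decaying solution $G$, then $H=G-\psi $ would have vanishing incoming trace at $x=0$ (so $\left( \left. H(0)\right\vert \left( \xi _{1}+u\right) H(0)\right) \leq 0$) and limit $-\psi $ at infinity (so the limit of the quadratic form is $\left( \left. \psi \right\vert \left( \xi _{1}+u\right) \psi \right) \geq 0$, the form being positive semidefinite on that span); monotonicity then forces $\left( \left. H\right\vert \mathcal{L}H\right) \equiv 0$, hence $H\in \ker \mathcal{L}$ and $H$ constant in $x$, hence $\left. \psi \right\vert _{\xi _{1}+u>0}=0$ and finally $\psi =0$ since $\psi $ is $\sqrt{M}$ times a polynomial. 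The degenerate modes $\Omega _{0}^{u}$ are absorbed by this same argument (they are also the source of the non-uniform decay near $u\in \left\{ 0,\pm c\right\} $ discussed after the corollary), not by a separate analysis of the solution operator as you suggest. Without this injectivity your argument establishes neither existence nor uniqueness, so as written the proof is incomplete, although the skeleton is the right one.
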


The exponential decay $e^{-\mu x}$ is determined by $\mu =\mu _{u}$ for
fixed $u$. However, the decay is not uniform in $u$ as $u$ tends to some $%
u_{0}\in \left\{ 0,\pm c\right\} $ from the left; it will appear $l$ slowly
varying mode(s) as $u\rightarrow u_{0}^{-}$ (cf. \cite{BG-21} and references
therein). By imposing $l$ extra conditions on the in-data for $u$ less than $%
u_{0}$, the slowly varying modes can be removed in a neighborhood of $u_{0}$%
, i.e., uniform exponential decay can be obtained in a neighborhood of $%
u_{0} $.

Consider now a flow with vanishing flow velocity in the $y$- and $z$-directions,
that is, with $u_{2}=u_{3}=0$. For supersonic evaporation, $u>c$, there are no
free variables, while for subsonic evaporation, $0<u<c$, there is one free
variable. For subsonic condensation, $-c<u<0$, there are two free variables,
and finally, for supersonic condensation, $u<-c$, there are three free variables.

\begin{remark}
Theorem~$\ref{T1}$ and Corollary~$\ref{C1}$ are valid for the more general
scattering cross section%
\begin{equation}
\widetilde{\sigma }=\widetilde{\sigma }_{\delta }=C\left\vert \boldsymbol{%
\xi }^{\prime }-\boldsymbol{\xi }_{\ast }^{\prime }\right\vert ^{\overline{%
\zeta }+1}\left\vert \boldsymbol{\xi }-\boldsymbol{\xi }_{\ast
}\right\vert ^{\overline{\zeta }-1}E^{\left( \zeta-\overline{\zeta } -1\right) /2-\delta
}\left( I^{\prime }I_{\ast }^{\prime }\right) ^{\delta /2-1}\text{,}
\label{CS1}
\end{equation}%
with $1\leq \overline{\zeta }+\zeta \leq 2$, or, equivalently, for the
collision kernel%
\begin{equation*}
\widetilde{B}_{\delta }=C\left\vert \boldsymbol{\xi }^{\prime }-\boldsymbol{%
\xi }_{\ast }^{\prime }\right\vert ^{\overline{\zeta }}\left\vert 
\boldsymbol{\xi }-\boldsymbol{\xi }_{\ast }\right\vert ^{\overline{\zeta }%
}E^{\left(\zeta -\overline{\zeta }\right)/2}\text{, where }1\leq \overline{\zeta }+\zeta \leq 2\text{.}
\end{equation*}
\end{remark}

\begin{remark}
For a scattering cross section~\eqref{CS} with $0\leq \zeta <1$---or more generally, for a scattering cross section~\eqref{CS1} with $0\leq \overline{\zeta }+\zeta <1$---Theorem~$\ref{T1}$ and Corollary~$\ref{C1}$ are valid for $\eta=\mu=0$.
\end{remark}

\begin{remark}
Theorem~$\ref{T1}$ and Corollary~$\ref{C1}$ are valid also for more general
boundary conditions at the interface $x=0$, where the distribution function
of emerging molecules (for which $\xi _{1}+u>0$, in problem~\eqref{LHSP},
for an interface at rest) possibly can depend (partly or
completely) on the distribution function of impinging molecules (for
which $\xi _{1}+u<0$, in problem~\eqref{LHSP}) \cite{Be-23d}.
\end{remark}

The results may be extended to the weakly nonlinear case applying similar methods
as in \cite{BG-21, Go-08}.

Considering---either in a linearized or weakly nonlinear setting---small
deviations $F=F(x,\boldsymbol{\xi },I)$, implies that $F_{0+}(\boldsymbol{\xi }%
,I)=F_{+}(0,\boldsymbol{\xi },I)$ has to be small. While Theorem~\ref{T1}
being valid for any function $F_{0}=F(0,\boldsymbol{\xi },I)$, such that $%
F_{0+}\in \mathcal{\mathfrak{h}}_{+}\cap \mathrm{D}(\mathcal{L})$,
restricting it to be of the form $F_{0}=M^{-1/2}M_{0}\left( \boldsymbol{\xi }%
+\mathbf{u}\right) -\sqrt{M}$, enforces that 
\begin{equation*}
\dfrac{\left\vert T_{0}-T_{\infty }\right\vert }{T_{\infty }}\ll 1\text{ and 
}\left\vert \mathcal{M}_{\infty }\right\vert =\dfrac{\left\vert u\right\vert 
}{c}\ll 1\text{.}
\end{equation*}%
Hence, direct applications of the results for the linearized problem to problem~\eqref{HSP1}, \eqref{BC} are only possible for small Mach numbers $\mathcal{M}_{\infty }$.
On the other hand, in addition to purely mathematical aspects, since $\left\vert F(x, \boldsymbol{\xi },I)\right\vert \ll 1$ for $x\gg 1$, Theorem $\ref{T1}$ imposes $k^{+}+l$ conditions on $F_{L+}=F_{+}(L,\boldsymbol{\xi },I)$---hence, on $f_{L+}=f_{+}(L,\boldsymbol{\xi },I)$ as well---for $L\gg 1$.

\section{Necessary conditions on the boundary data \label{NCBD}}

Now returning to the original nonlinear problem~\eqref{HSP1}, \eqref{BC}, this section concerns some necessary (but not sufficient) restrictions on
the boundary data for the existence of solutions to the half-space problem~\eqref{HSP1}, \eqref{BC}, which are obtained in
a quite similar way as in the papers~\cite{BGH-01, STG-01} for monatomic
species.

Applying the conservation laws~\eqref{conslaw}, we obtain the moments%
\begin{equation*}
\int\limits_{\mathbb{R}^{3}\times \mathbb{R}_{+}}\xi _{1}f(x,\boldsymbol{\xi 
},I\mathbf{)}
\begin{Bmatrix}
1 \\ 
\xi _{1} \\ 
\xi _{2} \\ 
\xi _{3} \\ 
\left\vert \boldsymbol{\xi }\right\vert ^{2}+\dfrac{2}{m}I%
\end{Bmatrix}%
\mathbf{\,}d\boldsymbol{\xi }\mathbf{\,}dI=:
\begin{Bmatrix}
\mathcal{L}_{1} \\ 
\mathcal{L}_{2} \\ 
\mathcal{L}_{3} \\ 
\mathcal{L}_{4} \\ 
\mathcal{L}_{5}%
\end{Bmatrix}
\end{equation*}%
where, noting that $\Gamma \left( \dfrac{\delta }{2}+1\right) =\dfrac{\delta 
}{2}\Gamma \left( \dfrac{\delta }{2}\right) $, 
\begin{align*}
\mathcal{L}_{1} &=n_{\infty }u,\hspace*{3mm}\mathcal{L}_{2}=n_{\infty
}\left( u^{2}+\frac{k_{B}T_{\infty }}{m}\right) ,\hspace*{3mm}\mathcal{L}%
_{3}=\mathcal{L}_{4}=0\text{, and} \\
\mathcal{L}_{5} &=n_{\infty }u\left( u^{2}+\left( 5+\delta \right) \frac{%
k_{B}T_{\infty }}{m}\right) \text{.}
\end{align*}%
Moreover, introducing the $\mathcal{H}$-functional 
\begin{equation}
\Psi \left( f\right) =\int\limits_{\mathbb{R}^{3}\times \mathbb{R}_{+}}\xi
_{1}f\log \left( I^{1-\delta /2}f\right) \mathbf{\,}d\boldsymbol{\xi }%
\mathbf{\,}dI\text{,}  \label{HF}
\end{equation}
and noting that by the $\mathcal{H}$-theorem~\eqref{HT},
\begin{equation}
\frac{d\Psi \left( f\right) }{dx}=\int_{\mathbb{R}^{3}\times \mathbb{R}%
_{+}}Q_{\delta }(f,f)\log \left( I^{1-\delta /2}f\right) \mathbf{\,}d%
\boldsymbol{\xi }\mathbf{\,}dI\leq 0\text{,}  \label{MHT}
\end{equation}%
it follows that
\begin{equation}
\Psi \left( f(x_{1},\boldsymbol{\xi },I)\right) \geq \Psi \left( f(x_{2},%
\boldsymbol{\xi },I)\right) \text{ if }x_{1}\leq x_{2}\text{.}  \label{FIE1}
\end{equation}%
Introduce the generalized $\mathcal{H}$-functional~\cite{BGH-01}%
\begin{equation}
H_{g}\left( f\right) =\int\limits_{\mathbb{R}^{3}\times \mathbb{R}%
_{+}}g\left( \boldsymbol{\xi },I\right) f\log \left( I^{1-\delta /2}f\right) 
\mathbf{\,}d\boldsymbol{\xi }\,dI  \label{GHF}
\end{equation}%
for nonnegative functions $g=g\left( \boldsymbol{\xi },I\right) \in C\left( 
\mathbb{R}^{3}\times \mathbb{R}_{+},\mathbb{R}_{+}\right) $, and moments%
\begin{equation}
\begin{Bmatrix}
\mathcal{\mu }_{1} \\ 
\mathcal{\mu }_{2} \\ 
\mathcal{\mu }_{3} \\ 
\mathcal{\mu }_{4} \\ 
\mathcal{\mu }_{5}%
\end{Bmatrix}%
:=\int\limits_{\mathbb{R}^{3}\times \mathbb{R}_{+}}g\left( 
\boldsymbol{\xi },I\right) f(x,\mathbf{\xi },I)
\begin{Bmatrix}
1 \\ 
\xi _{1} \\ 
\xi _{2} \\ 
\xi _{3} \\ 
\left\vert \boldsymbol{\xi }\right\vert ^{2}+\dfrac{2}{m}I%
\end{Bmatrix}%
\mathbf{\,}d\boldsymbol{\xi }\mathbf{\,}dI\text{,}  \label{Mo}
\end{equation}%
where $\mathcal{\mu }_{1},\mathcal{\mu }_{5}>0$ and $\mathcal{\mu }_{2},%
\mathcal{\mu }_{3},\mathcal{\mu }_{4}\in \mathbb{R}$.

\begin{lemma}
\label{L1}Let $f=f(\boldsymbol{\xi },I)$ be a function such that the
integral $\left( \ref{HF}\right) $ and the moments $\left( \ref{Mo}\right) $
are finite. Assume that there is a Maxwellian%
\begin{equation}
\begin{aligned}
f_{M} &=f_{M}(\boldsymbol{\xi },I)\text{, where} \\
\text{ }f_{M} &= KI^{\delta /2-1}\exp \left( -\left( \beta \left( \left\vert 
\boldsymbol{\xi }\right\vert ^{2}+\frac{2}{m}I\right) +\mathbf{\gamma \cdot
\xi }\right) \right) \text{, }K,\beta \geq 0\text{, }\boldsymbol{\gamma }\in 
\mathbb{R}^{3}\text{,}
\end{aligned}
\label{ConstrMaxw}
\end{equation}
having the same moments~\eqref{Mo} as $f=f(\boldsymbol{\xi },I)$.
Then the Maxwellian $f_{M}$ is uniquely defined and satisfies the
inequality 
\begin{equation*}
H_{g}(f_{M})\leq H_{g}(f).
\end{equation*}
\end{lemma}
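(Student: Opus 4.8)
The plan is to recognize Lemma~\ref{L1} as a Gibbs-type variational characterization: the Maxwellian~\eqref{ConstrMaxw} is precisely the member of the exponential family generated by the five weight functions $\{1,\xi_1,\xi_2,\xi_3,|\boldsymbol{\xi}|^2+2I/m\}$ occurring in~\eqref{Mo}, and it minimizes $H_g$ among all admissible $f$ sharing the moments~\eqref{Mo}. The engine is the pointwise strict convexity of the scalar map $s\mapsto \chi(s):=s\log(I^{1-\delta/2}s)$ on $(0,\infty)$ for each fixed $(\boldsymbol{\xi},I)$, together with the crucial structural fact that $\log(I^{1-\delta/2}f_M)$ is an affine combination of exactly those five weight functions.

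For the inequality I would first record $\chi'(s)=\log(I^{1-\delta/2}s)+1$ and $\chi''(s)=1/s>0$, so that the tangent-line inequality holds pointwise,
\[
\chi(f)\ge \chi(f_M)+\chi'(f_M)\left(f-f_M\right),
\]
with equality if and only if $f=f_M$. Since $I^{1-\delta/2}f_M=K\exp\!\big(-\beta(|\boldsymbol{\xi}|^2+2I/m)-\boldsymbol{\gamma}\cdot\boldsymbol{\xi}\big)$, the multiplier $\chi'(f_M)=\log K+1-\beta(|\boldsymbol{\xi}|^2+2I/m)-\boldsymbol{\gamma}\cdot\boldsymbol{\xi}$ is a linear combination of the weight functions of~\eqref{Mo}. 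Multiplying the tangent-line inequality by $g\ge 0$ and integrating, the cross term $\int g\,\chi'(f_M)(f-f_M)\,d\boldsymbol{\xi}\,dI$ decomposes into a fixed linear combination of the differences $\mu_i(f)-\mu_i(f_M)$, all of which vanish because $f$ and $f_M$ share the moments~\eqref{Mo}. Hence the cross term is zero and $H_g(f)\ge H_g(f_M)$ follows at once.

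For uniqueness I would apply the established inequality to two candidate Maxwellians of the form~\eqref{ConstrMaxw} sharing the moments, obtaining $H_g(f_M^{(1)})\le H_g(f_M^{(2)})$ and the reverse, hence equality; strict convexity of $\chi$ then promotes the integrated identity to equality of the integrands almost everywhere on $\{g>0\}$, so $f_M^{(1)}=f_M^{(2)}$ there. Because $\mu_1>0$ forces $\{g>0\}$ to have positive measure and both candidates are real-analytic in $(\boldsymbol{\xi},I)$, taking logarithms shows two affine combinations of $\{1,\xi_1,\xi_2,\xi_3,|\boldsymbol{\xi}|^2+2I/m\}$ agree on a positive-measure set and are therefore identical, so $(K,\boldsymbol{\gamma},\beta)$ coincide. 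Equivalently, one may observe that the moment map is the gradient of the strictly convex log-partition function $a\mapsto\int g\,I^{\delta/2-1}\exp\!\big(a_0+a_1\xi_1+a_2\xi_2+a_3\xi_3+a_4(|\boldsymbol{\xi}|^2+2I/m)\big)\,d\boldsymbol{\xi}\,dI$, whose gradient is injective.

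The main obstacle is integrability bookkeeping rather than the convexity itself: I must guarantee that the cross term is genuinely finite and splits as claimed. This is exactly where the placement of $\chi'(f_M)$ in the span of the moment weights is decisive, since then $\int g\,\chi'(f_M)f$ and $\int g\,\chi'(f_M)f_M$ both reduce to finite linear combinations of the moments~\eqref{Mo}, and $H_g(f_M)$ itself is finite for the same reason. One should also dispose of the case $H_g(f)=+\infty$ (where the bound is trivial and the nonnegativity of the integrand $\chi(f)-\chi(f_M)-\chi'(f_M)(f-f_M)\ge 0$ suffices), and note that finiteness of the moments in~\eqref{Mo} restricts attention to the regime $\beta>0$, for which the density-of-states factor $I^{\delta/2-1}$ and the Gaussian tail make all the relevant integrals converge.
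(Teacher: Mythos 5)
Your proposal is correct and follows essentially the same route as the paper: your tangent-line inequality for $s\mapsto s\log(I^{1-\delta/2}s)$ is precisely the paper's elementary inequality $b\log\tfrac{b}{a}-(b-a)\geq 0$, the cross term vanishes for the same reason (namely that $\log(I^{1-\delta/2}f_{M})$ lies in the span of the five moment weights), and the uniqueness argument via the equality case on $\{g>0\}$ is identical. The extra integrability bookkeeping and the exponential-family remark are harmless additions that the paper leaves implicit.
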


The proof is performed in the exact same way as the proof of Lemma $1$ in 
\cite{BGH-01}, but for the sake of completeness we still present it here.

\begin{proof}
By the inequality
\begin{equation}
b\log \frac{b}{a}-\left( b-a\right) \geq 0\text{ for }a,b>0\text{,}
\label{ineq}
\end{equation}%
with equality in inequality $\left( \ref{ineq}\right)$ if and only if $a=b$, and the assumed conservation of the moments  $\left( \ref{Mo}\right)$, we obtain, noting that $\log f_{M} \in \rm{span}\left\{1,\xi_1,\xi_2,\xi_3,|\boldsymbol{\xi}|^2+(2/m)I\right\}$,
\begin{equation*}
H_{g}(f)-H_{g}(f_{M})=\int\limits_{\mathbb{R}^{3}\times \mathbb{R}%
_{+}}g\left( \boldsymbol{\xi },I\right) \left( f\log \frac{f}{f_{M}}-\left(
f-f_{M}\right) \right) \mathbf{\,}d\boldsymbol{\xi }\mathbf{\,}dI\geq 0\text{.}
\end{equation*}

Assume that there is another Maxwellian $\widetilde{f}_{M}$ having the same
moments~\eqref{Mo}. Then $H_{g}(f_{M})=H_{g}(\widetilde{f}_{M})$, and therefore%
\begin{equation*}
\widetilde{f}_{M}\log \frac{\widetilde{f}_{M}}{f_{M}}-\left(\widetilde{f}_{M}-f_{M}\right)=0
\end{equation*}%
on a set of positive measure in $\mathbb{R}^{3}\times \mathbb{R}_{+}$, 
where $g\left( \boldsymbol{\xi },I\right) >0$. Hence, $\widetilde{f}%
_{M}=f_{M}$ for all $\left( \boldsymbol{\xi },I\right) \in \mathbb{R}%
^{3}\times \mathbb{R}_{+}$.
\end{proof}
\begin{remark}
\label{R3}Assuming that $\boldsymbol{\gamma }=\left( 0,\gamma _{2},\gamma
_{3}\right) $ in the Maxwellian $\left( \ref{ConstrMaxw}\right) $, we see that  Lemma $\ref{L1}$ remains valid
even if excluding the second moment $\mathcal{\mu }_{2}$ from the moments $%
\left( \ref{Mo}\right) $ by noting that then $\log
f_{M}\in \mathrm{span}\left\{ 1,\xi _{2},\xi _{3},\left\vert \boldsymbol{\xi 
}\right\vert ^{2}+\dfrac{2}{m}I\right\}$.
\end{remark}
In order to be able to apply Lemma~\ref{L1}, considering $\mathcal{H}$-functional~\eqref{HF}, which does not fulfill the requirements of the lemma itself, we rewrite the $\mathcal{H}$-functional~\eqref{HF} in the following way%
\begin{equation*}
\Psi \left( f\right) =\Psi _{+}\left( f_{+}\right) -\Psi _{+}\left(
f_{-}\right) \text{, }\;\Psi _{+}\left( f_{\pm }\right) =\int\limits_{%
\mathbb{R}_{+}^{3}\times \mathbb{R}_{+}}\xi _{1}f_{\pm }\log \left(
I^{1-\delta /2}f_{\pm }\right) \mathbf{\,}d\boldsymbol{\xi }\mathbf{\,}dI%
\text{,}
\end{equation*}%
where $\mathbb{R}_{+}^{3}=\left\{ \left. \boldsymbol{\xi }\in \mathbb{R}^{3}\right\vert ~\xi _{1}>0\right\}$.
Note that%
\begin{equation*}
\Psi _{+}\left( f_{\pm }\right) =H_{g}\left( f_{\pm }\right) \text{ for }
g(\boldsymbol{\xi },I) = 
\begin{cases}
\xi _{1} & \text{if $\xi _{1}>0$,} \\ 
0 & \text{if $\xi _{1}\leq 0$.}%
\end{cases}
\end{equation*}%
Then 
\begin{equation}
\Psi _{+}\left( M_{0+}\right) -\Psi _{+}\left( f_{-}(0,\boldsymbol{\xi }%
,I)\right) =\Psi \left( f(0,\boldsymbol{\xi },I)\right) \geq \Psi \left(
M_{\infty }\right) .  \label{FIE2}
\end{equation}%
Introduce the "half"-moments $N_{i}^{\pm }$, $i=1,\dotsc ,5$, by 
\begin{equation*}
\int\limits_{\mathbb{R}_{+}^{3}\times \mathbb{R}_{+}}\xi _{1}
\begin{Bmatrix}
1 \\ 
\xi _{1} \\ 
\xi _{2} \\ 
\xi _{3} \\ 
\left\vert \boldsymbol{\xi }\right\vert ^{2}+\dfrac{2}{m}I%
\end{Bmatrix}
f_{\pm }\left( 0,\boldsymbol{\xi },I\right) \,d\boldsymbol{%
\xi }\mathbf{\,}dI=:
\begin{Bmatrix}
N_{1}^{\pm } \\ 
N_{2}^{\pm } \\ 
N_{3}^{\pm } \\ 
N_{4}^{\pm } \\ 
N_{5}^{\pm }%
\end{Bmatrix}
\text{.}
\end{equation*}%
Then, by splitting the velocity variable into its negative and nonnegative parts, it comes that 
\begin{equation}
\begin{aligned}
N_{1}^{+}-N_{1}^{-} &=\mathcal{L}_{1}, \\
N_{2}^{+}+N_{2}^{-} &=\mathcal{L}_{2}, \\
N_{3}^{+}-N_{3}^{-} &=\mathcal{L}_{3}, \\
N_{4}^{+}-N_{4}^{-} &=\mathcal{L}_{4}, \\
N_{5}^{+}-N_{5}^{-} &=\mathcal{L}_{5}\text{.}  
\end{aligned}%
\label{MR}
\end{equation}
On the other hand, by Lemma~\ref{L1}%
\begin{equation*}
  \Psi _{+}\left( f_{-}(0,\boldsymbol{\xi },I)\right) \geq \Psi _{+}\left( f_{M-}\right) \text{.}
\end{equation*}%
Since, at $x=0$%
\begin{equation*}
N_{1}^{+}=n_{0}\sqrt{\frac{k_{B}T_{0}}{2\pi m}}\text{,}\quad 
N_{2}^{+}=\frac{n_{0}k_{B}T_{0}}{2m}\text{,}\quad
N_{5}^{+}=n_{0}\sqrt{\frac{k_{B}T_{0}}{2\pi m}}\left( 4+\delta \right) \frac{k_{B}T_{0}}{m}\text{,}
\end{equation*}%
by the relations $(\ref{MR})$,%
\begin{equation}
\begin{aligned}
N_{1}^{-} &= N_{1}^{+}-\mathcal{L}_{1}=n_{0}\sqrt{\frac{k_{B}T_{0}}{2\pi m}} -n_{\infty }u\geq 0, \\
N_{2}^{-} &= \mathcal{L}_{2}-N_{2}^{+}=n_{\infty }\left( u^{2} + 
\frac{k_{B}T_{\infty }}{m}\right) - \frac{n_{0}k_{B}T_{0}}{2m}\geq 0, \\
N_{5}^{-} &= N_{5}^{+}-\mathcal{L}_{5} \\&=
n_{0}\sqrt{\frac{k_{B}T_{0}}{2\pi m}} \left( \left( 4+\delta \right) \frac{k_{B}T_{0}}{m}\right) - 
n_{\infty}u\left( u^{2}+\left( 5+\delta \right) \frac{k_{B}T_{\infty }}{m}\right) \geq 0\text{.}
\end{aligned}
\label{NMR}
\end{equation}
Introducing the saturation pressures $p_{0}:=n_{0}kT_{0}$ at the interface and $p_{\infty }:=n_{\infty }kT_{\infty }$ at infinity, we obtain, by the
second inequality~\eqref{NMR},
\begin{equation*}
\frac{p_{\infty }}{p_{0}}\geq \frac{1}{2\left( 1+\gamma \mathcal{M}_{\infty
}^{2}\right) },
\end{equation*}%
where $\mathcal{M}_{\infty }=\dfrac{u}{c}=\sqrt{\dfrac{m}{\gamma k_{B}T_{\infty }}}u$ is the Mach number at infinity.
If $u>0$, then, by the first and third inequality $(\ref{NMR})$, respectively, also 
\begin{align*}
  \dfrac{n_{\infty }}{n_{0}} &\leq \frac{1}{u}\sqrt{\frac{k_{B}T_{0}}{2\pi m}} \\
  n_{\infty }u\left( u^{2}+\left( 5+\delta \right) \frac{k_{B}T_{\infty }}{m}\right) 
  &\leq n_{0}\sqrt{\frac{k_{B}T_{0}}{2\pi m}}\left( 4+\delta \right) \frac{k_{B}T_{0}}{m}
\end{align*}
or, equivalently,
\begin{align*}
  \mathcal{M}_{\infty } &\leq \frac{1}{\sqrt{2\pi \gamma }\dfrac{p_{\infty }}{p_{0}}}\sqrt{\frac{T_{\infty }}{T_{0}}}, \\
  \mathcal{M}_{\infty }\left( 3+\delta +\mathcal{M}_{\infty }^{2}\right) 
  &\leq \frac{4+\delta }{\gamma \sqrt{2\pi \gamma }}\frac{1}{\dfrac{p_{\infty }}{p_{0}}\sqrt{\dfrac{T_{\infty }}{T_{0}}}}\text{.}
\end{align*}

Also, by the inequality~\eqref{FIE2}, 
\begin{equation*}
  0\leq \Psi _{+}\left( f_{-}(0,\boldsymbol{\xi },I)\right) \leq \Psi_{+}\left( M_{0+}\right) -\Psi \left( M_{\infty }\right)\text{.}
\end{equation*}%
However,%
\begin{equation*}
  \Psi _{+}\left( M_{0+}\right) =\frac{p_{0}}{\sqrt{2\pi mk_{B}T_{0}}}
  \log \frac{m^{3/2}p_{0}\sqrt{e}}{\left( 2\pi \right) ^{3/2}\Gamma \left( \delta/2\right) \left( ek_{B}T_{0}\right) ^{\left( 5+\delta \right) /2}}\text{,}
\end{equation*}%
while%
\begin{equation*}
  \Psi \left( M_{\infty }\right) =p_{\infty }\frac{u}{k_{B}T_{\infty }}
  \log \frac{m^{3/2}p_{\infty }e}{\left( 2\pi \right) ^{3/2}\Gamma \left( \delta/2\right) \left( ek_{B}T_{\infty }\right) ^{\left( 5+\delta \right) /2}}\text{,}
\end{equation*}%
and hence,%
\begin{equation}
\begin{aligned}
  \Psi _{+}\left( f_{-}(0,\boldsymbol{\xi },I)\right) &\leq 
  \frac{p_{0}}{\sqrt{2\pi mk_{B}T_{0}}}
  \log \frac{m^{3/2}p_{0}\sqrt{e}}
  {\left( 2\pi \right)^{3/2}\Gamma \left( \delta /2\right) \left( ek_{B}T_{0}\right) ^{\left(5+\delta \right) /2}} \\
  &\qquad + 
  p_{\infty }\frac{u}{k_{B}T_{\infty }}
  \log \frac{\left( 2\pi \right)^{3/2}\Gamma \left( \delta /2\right) \left( ek_{B}T_{\infty }\right)^{\left( 5+\delta \right) /2}}
  {m^{3/2}p_{\infty }e}\text{.}
\end{aligned}
\label{PIE1}
\end{equation}

By the general inequality $\left( \ref{ineq}\right) $, 
\begin{align*}
&\Psi _{+}\left( f_{-}(0,\boldsymbol{\xi },I)\right) \\
&\quad \geq \int\limits_{\mathbb{R}_{+}^{3}\times \mathbb{R}_{+}}\xi _{1}f_{-}(0,%
\boldsymbol{\xi },I)\log \left( I^{1-\delta /2}M_{0+}\right) +\xi
_{1}f_{-}(0,\boldsymbol{\xi },I)-\xi _{1}M_{0+}\mathbf{\,}d\boldsymbol{\xi }%
\mathbf{\,}dI \\
&\quad=N_{1}^{-}\left( \log \frac{m^{3/2}n_{0}}{\left( 2\pi \right) ^{3/2}\Gamma
\left( \delta /2\right) \left( k_{B}T_{0}\right) ^{\left( 3+\delta \right)
/2}}+1\right) -\frac{mN_{5}^{-}}{2k_{B}T_{0}}-N_{1}^{+}\text{.}
\end{align*}%
Hence, by the relations $(\ref{MR})$, yields that 
\begin{equation}
\begin{aligned}
&\Psi _{+}\left( f_{-}(0,\boldsymbol{\xi },I)\right) \\
&\geq \left( N_{1}^{+}-\mathcal{L}_{1}\right) \log \frac{m^{3/2}p_{0}}{%
\left( 2\pi \right) ^{3/2}\Gamma \left( \delta /2\right) \left(
k_{B}T_{0}\right) ^{\left( 5+\delta \right) /2}}-\mathcal{L}_{1}+\frac{%
\mathcal{L}_{5}-N_{5}^{+}}{2k_{B}T_{0}}m \\
&=\left( \frac{p_{0}}{\sqrt{2\pi mk_{B}T_{0}}}-p_{\infty }\frac{u}{%
k_{B}T_{\infty }}\right) \left( \log \frac{m^{3/2}p_{0}}{\left( 2\pi \right)
^{3/2}\Gamma \left( \delta /2\right) \left( k_{B}T_{0}\right) ^{\left(
5+\delta \right) /2}}-\frac{4+\delta }{2}\right) \\
&-\frac{6+\delta }{2}p_{\infty }\frac{u}{k_{B}T_{\infty }}+\frac{5+\delta }{%
2}p_{\infty }\frac{u}{k_{B}T_{0}}\left( 1+\frac{\mathcal{M}_{\infty }^{2}}{%
3+\delta }\right) \text{.}
\end{aligned}
\label{PIE2}
\end{equation}

Now, by combining the inequalities~\eqref{PIE1} and~\eqref{PIE2}, we obtain the inequality%
\begin{equation}
u\left( \log \frac{p_{0}}{p_{\infty }}+\frac{5+\delta }{2}\log \frac{%
T_{\infty }}{T_{0}}+\frac{5+\delta }{2}\left( 1-\left( 1+\frac{\mathcal{M}%
_{\infty }^{2}}{3+\delta }\right) \frac{T_{\infty }}{T_{0}}\right) \right)
\geq 0\text{,}  \label{PIE3}
\end{equation}%
which, for $u>0$, reduces to 
\begin{equation*}
\log \frac{p_{0}}{p_{\infty }}+\frac{5+\delta }{2}\log \frac{T_{\infty }}{%
T_{0}}+\frac{5+\delta }{2}\left( 1-\left( 1+\frac{\mathcal{M}_{\infty }^{2}}{%
3+\delta }\right) \frac{T_{\infty }}{T_{0}}\right) \geq 0\text{,}
\end{equation*}%
or, equivalently,%
\begin{equation*}
\frac{p_{\infty }}{p_{0}}\leq \left( \frac{T_{\infty }}{T_{0}}\right) ^{%
\tfrac{5+\delta }{2}}\exp \left( \frac{5+\delta }{2}\left( 1-\left( 1+\frac{%
\mathcal{M}_{\infty }^{2}}{3+\delta }\right) \frac{T_{\infty }}{T_{0}}%
\right) \right)\text{.}
\end{equation*}%
Since the function $h(x)=x^{\tfrac{5+\delta }{2}}\exp \left( \frac{5+\delta}{2}(1-x)\right)$, $x>0$, obtains a maximum at $x=1$, we obtain the inequality
\begin{equation*}
\begin{aligned}
\frac{p_{\infty }}{p_{0}} &\leq \frac{\left( \dfrac{T_{\infty }}{T_{0}}
\left( 1+\dfrac{\mathcal{M}_{\infty }^{2}}{3+\delta }\right) \right)
^{\left( 5+\delta \right) /2}\exp \left( \dfrac{5+\delta }{2}\left( 1-\left(
1+\dfrac{\mathcal{M}_{\infty }^{2}}{3+\delta }\right) \dfrac{T_{\infty }}{%
T_{0}}\right) \right) }{\left( 1+\dfrac{\mathcal{M}_{\infty }^{2}}{3+\delta }%
\right) ^{\left( 5+\delta \right) /2}} \\
&\leq \frac{1}{\left( 1+\dfrac{\mathcal{M}_{\infty }^{2}}{3+\delta }\right)^{\left( 5+\delta \right) /2}}\text{.}
\end{aligned}
\end{equation*}

For $u<0$, inequality $\left( \ref{PIE3}\right) $ reduces to 
\begin{equation*}
\log \frac{p_{0}}{p_{\infty}}+\frac{5+\delta }{2}\log \frac{T_{\infty }}{%
T_{0}}+\frac{5+\delta }{2}\left( 1-\left( 1+\frac{\mathcal{M}_{\infty}^{2}%
}{3+\delta }\right) \frac{T_{\infty }}{T_{0}}\right) \leq 0
\end{equation*}%
or, equivalently,%
\begin{equation*}
\frac{p_{\infty }}{p_{0}}\geq \left( \frac{T_{\infty }}{T_{0}}\right)
^{\left( 5+\delta \right) /2}\exp \left( \frac{5+\delta }{2}\left( 1-\left(
1+\frac{\mathcal{M}_{\infty }^{2}}{3+\delta }\right) \frac{T_{\infty }}{T_{0}%
}\right) \right)\text{.}
\end{equation*}

The following lemma summarizes the results obtained in this section on the necessary conditions on the boundary data for the existence of solutions to the half-space problem.

\begin{lemma}
\label{L2}For the half-space problem~\eqref{HSP1}, with boundary conditions~\eqref{BC}, to possibly have any solutions, the following relations between the parameters of the two Maxwellians at the condensed interface and the uniform phase at infinity are forced to be fulfilled.

i) (Overall condition.) For all $u$ (or, equivalently, all $\mathcal{M}_{\infty }$), we have the necessary condition
\begin{equation}
\frac{p_{\infty }}{p_{0}}\geq \frac{1}{2\left( 1+\gamma \mathcal{M}_{\infty
}^{2}\right) }\text{.}  \label{NC1}
\end{equation}

ii) (Evaporation condition.) For all $u>0$ (or, equivalently, all $\mathcal{M}_{\infty }>0$), we have the additional necessary conditions
\begin{equation}
\begin{aligned}
\mathcal{M}_{\infty } &\leq \frac{1}{\sqrt{2\pi \gamma }\dfrac{p_{\infty }}{%
p_{0}}}\sqrt{\frac{T_{\infty }}{T_{0}}} \\
\mathcal{M}_{\infty }\left( 3+\delta +\mathcal{M}_{\infty }^{2}\right) 
&\leq \frac{4+\delta }{\gamma \sqrt{2\pi \gamma }}\frac{1}{\dfrac{p_{\infty
}}{p_{0}}\sqrt{\dfrac{T_{\infty }}{T_{0}}}} \\
\frac{p_{\infty }}{p_{0}} &\leq \frac{1}{\left( 1+\dfrac{\mathcal{M}%
_{\infty }^{2}}{3+\delta }\right) ^{\left( 5+\delta \right) /2}}\text{.}
\end{aligned}
\label{NCP1}
\end{equation}

iii) (Condensation condition.) For all $u<0$ (or, equivalently, all $\mathcal{M}_{\infty }<0$), we have the additional necessary condition
\begin{equation}
\frac{p_{\infty }}{p_{0}}\geq \left( \frac{T_{\infty }}{T_{0}}\right)
^{\left( 5+\delta \right) /2}\exp \left( \frac{5+\delta }{2}\left( 1-\left(
1+\frac{\mathcal{M}_{\infty }^{2}}{3+\delta }\right) \frac{T_{\infty }}{T_{0}%
}\right) \right) \text{.}  \label{NCN1}
\end{equation}
\end{lemma}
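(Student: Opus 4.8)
The plan is to assemble the three groups of relations from the two structural facts recorded in the preceding analysis: the five conservation laws~\eqref{conslaw}, which fix the far-field fluxes $\mathcal{L}_1,\dotsc,\mathcal{L}_5$, and the entropy inequality~\eqref{MHT}, which renders the $\mathcal{H}$-flux $\Psi$ nonincreasing in $x$. Because each displayed inequality derived above is exactly one member of the three families in the statement, the task is to organize them by the sign of $u$ and to verify that the algebraic rearrangements into the stated pressure--temperature--Mach form are the asserted ones.

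First I would collect the relations that come purely from nonnegativity of the incoming half-range distribution. The moment identities~\eqref{MR} relate the combinations $N_i^{+}\pm N_i^{-}$ to the fluxes $\mathcal{L}_i$, while at the interface the outgoing half-moments $N_1^{+},N_2^{+},N_5^{+}$ are computed in closed form from the non-drifting Maxwellian $M_{0+}$. Since $f_-(0,\boldsymbol{\xi},I)\ge 0$ forces $N_1^{-},N_2^{-},N_5^{-}\ge 0$, the three inequalities~\eqref{NMR} follow. The symmetric combination $N_2^{+}+N_2^{-}=\mathcal{L}_2$ holds for every sign of $u$, so $N_2^{-}\ge 0$ yields the overall condition~\eqref{NC1} after inserting $p=nk_BT$ and $\mathcal{M}_\infty^2=mu^2/(\gamma k_BT_\infty)$. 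When $u>0$, the remaining two bounds $N_1^{-}\ge 0$ and $N_5^{-}\ge 0$ may be divided through by $u$ without changing direction and rearranged into the first two evaporation inequalities of~\eqref{NCP1}.

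The main part is the last evaporation inequality and the condensation inequality~\eqref{NCN1}, both of which rest on the entropy flux. The strategy is to sandwich the single unknown $\Psi_+\!\left(f_-(0,\boldsymbol{\xi},I)\right)$ between two explicit expressions. For the upper bound I would use~\eqref{FIE2}, namely $\Psi_+(M_{0+})-\Psi_+(f_-(0))=\Psi(f(0))\ge\Psi(M_\infty)$, together with the closed-form Gaussian evaluations of $\Psi_+(M_{0+})$ and $\Psi(M_\infty)$, giving~\eqref{PIE1}. For the lower bound I would apply the pointwise inequality~\eqref{ineq} with reference $M_{0+}$, so that $\log\!\left(I^{1-\delta/2}M_{0+}\right)$ becomes affine in the collision invariants and the integral reduces to a combination of half-moments; eliminating $N_1^{-}$ and $N_5^{-}$ through~\eqref{MR} produces~\eqref{PIE2}. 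The convexity principle of Lemma~\ref{L1} is what conceptually guarantees that a comparison Maxwellian is the sharp minimizer of $\Psi_+=H_g$, but the concrete bound~\eqref{PIE2} is obtained fastest from the first-order inequality~\eqref{ineq} directly. Combining~\eqref{PIE1} with~\eqref{PIE2} and cancelling the common explicit terms collapses everything to the single inequality~\eqref{PIE3}.

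The final step is the case split in~\eqref{PIE3}. For $u>0$ I divide by $u$, exponentiate, and write the right-hand side as $h(x)/\bigl(1+\mathcal{M}_\infty^2/(3+\delta)\bigr)^{(5+\delta)/2}$ with $x=\bigl(1+\mathcal{M}_\infty^2/(3+\delta)\bigr)T_\infty/T_0$ and $h(x)=x^{(5+\delta)/2}\exp\!\bigl(\tfrac{5+\delta}{2}(1-x)\bigr)$; since $h$ attains its maximum $h(1)=1$, the bound collapses to the third inequality of~\eqref{NCP1}. For $u<0$, dividing by $u$ reverses the inequality and delivers~\eqref{NCN1} at once. The step I expect to be most delicate is the upper bound: one must confirm that~\eqref{FIE2} is genuinely available, i.e.\ that the monotonicity~\eqref{FIE1} of $\Psi$ together with the far-field limit $f\to M_\infty$ legitimately passes to $\Psi(f(0))\ge\Psi(M_\infty)$, and that the Gaussian integrals in~\eqref{PIE1} are evaluated against the correct normalizations of $M_{0+}$ and $M_\infty$. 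Once those are secured, the optimization through $h$ and the two sign cases are routine.
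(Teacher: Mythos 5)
Your proposal is correct and follows essentially the same route as the paper: the overall and the first two evaporation conditions from nonnegativity of the half-moments $N_1^-$, $N_2^-$, $N_5^-$ via the splitting~\eqref{MR}, and the remaining two conditions from sandwiching $\Psi_+\left(f_-(0,\boldsymbol{\xi},I)\right)$ between the entropy bound~\eqref{PIE1} and the pointwise bound~\eqref{PIE2} with reference $M_{0+}$, then optimizing $h(x)=x^{(5+\delta)/2}\exp\left(\tfrac{5+\delta}{2}(1-x)\right)$ at $x=1$. Your remark that Lemma~\ref{L1} is the conceptual underpinning while the concrete bound~\eqref{PIE2} comes directly from~\eqref{ineq} accurately reflects how the paper itself proceeds.
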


In addition to the above, we end this section by proving the following additional necessary condition.
\begin{lemma}
\label{L3}
For the half-space problem~\eqref{HSP1}, with boundary conditions~\eqref{BC}, to possibly have any solutions for the case $u=0$ (or, equivalently, $\mathcal{M}_{\infty }=0$), we have the necessary condition%
\begin{equation}
\frac{p_{\infty }}{p_{0}}=\frac{T_{\infty }}{T_{0}}=1\text{.}  \label{TC}
\end{equation}%
Moreover, under the condition~\eqref{TC} there exists a unique solution $f=f(x,\boldsymbol{\xi },I)=M_{0}(\boldsymbol{\xi },I)$.
\end{lemma}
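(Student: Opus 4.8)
The plan is to specialize the inequalities already derived to $u=0$ and to exploit a coincidence between the upper and lower bounds for $\Psi_+\!\left(f_-(0,\boldsymbol{\xi},I)\right)$. At $u=0$ one has $\mathcal{L}_1=\mathcal{L}_5=0$ and $\Psi\!\left(M_\infty\right)=0$, so the upper bound \eqref{PIE1} becomes $\Psi_+\!\left(f_-(0,\boldsymbol{\xi},I)\right)\le\Psi_+\!\left(M_{0+}\right)$. In the lower bound \eqref{PIE2} every term carrying the factor $u$ vanishes; absorbing the factors of $e$ inside $\Psi_+\!\left(M_{0+}\right)$ (equivalently, replacing $\log\sqrt{e}-\tfrac{5+\delta}{2}\log e$ by $-\tfrac{4+\delta}{2}$), one sees that the surviving term equals $\Psi_+\!\left(M_{0+}\right)$ exactly. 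Hence the two bounds coincide and $\Psi_+\!\left(f_-(0,\boldsymbol{\xi},I)\right)=\Psi_+\!\left(M_{0+}\right)$ is forced.

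\textbf{Equality analysis.} Since \eqref{PIE2} comes from the elementary inequality \eqref{ineq} applied to the pair $\left(f_-(0,\boldsymbol{\xi},I),M_{0+}\right)$, the just-obtained equality propagates back to pointwise equality, $f_-(0,\boldsymbol{\xi},I)=M_{0+}$ for almost every $\boldsymbol{\xi}$ with $\xi_1>0$. Because $M_0$ is non-drifting, hence even in $\xi_1$, this combined with the inflow condition $f_+(0,\boldsymbol{\xi},I)=M_{0+}$ yields $f(0,\boldsymbol{\xi},I)=M_0$ for all $\boldsymbol{\xi}$. In particular $\Psi\!\left(f(0,\boldsymbol{\xi},I)\right)=\Psi_+\!\left(M_{0+}\right)-\Psi_+\!\left(f_-(0,\boldsymbol{\xi},I)\right)=0=\Psi\!\left(M_\infty\right)$.

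\textbf{Propagation, sufficiency and uniqueness.} Next I would invoke the monotonicity \eqref{FIE1}: as $\Psi\!\left(f(x,\boldsymbol{\xi},I)\right)$ is non-increasing and squeezed between its limit $\Psi\!\left(M_\infty\right)=0$ and $\Psi\!\left(f(0,\boldsymbol{\xi},I)\right)=0$, it is identically zero, so $d\Psi/dx\equiv0$. By the equality case of \eqref{MHT} this forces $Q_\delta(f,f)=0$, i.e.\ $f$ is a Maxwellian for every $x$; the steady equation \eqref{HSP1} then reduces to $\xi_1\,\partial_x f=0$, and continuity in $\boldsymbol{\xi}$ gives $\partial_x f\equiv0$, whence $f\equiv f(0,\boldsymbol{\xi},I)=M_0$. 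The far-field condition in \eqref{BC} forces $M_0=M_\infty$, that is $n_0=n_\infty$ and $T_0=T_\infty$, which is exactly \eqref{TC}. Conversely, under \eqref{TC} one has $M_0=M_\infty$, and $f\equiv M_0$ solves \eqref{HSP1}, \eqref{BC} since $Q_\delta(M_0,M_0)=0$ and $M_0$ is independent of $x$; uniqueness is immediate, because the preceding argument applies verbatim to any solution with $u=0$ and forces it to equal $M_0$.

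\textbf{Main obstacle.} The crux is the exact cancellation asserted in the first paragraph---that \eqref{PIE2} collapses to $\Psi_+\!\left(M_{0+}\right)$ at $u=0$---since it is precisely this coincidence that turns the one-sided entropy inequality into an equality and thereby pins the boundary data; I would verify it by a direct computation of the prefactor and logarithm. The second delicate point is the passage from ``$f$ is a Maxwellian for each $x$'' to ``$f$ is $x$-independent''; this rests on the steady equation together with the equality case of the $\mathcal{H}$-theorem and, at the level of rigor of the entropy method used here, on enough regularity to differentiate $\Psi$ and to read $\partial_x f\equiv0$ off $\xi_1\,\partial_x f=0$.
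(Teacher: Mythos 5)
Your proof is correct and follows essentially the same route as the paper: squeeze $\Psi\left(f(x,\cdot)\right)$ between $\Psi\left(M_{\infty}\right)=0$ and $\Psi\left(f(0,\cdot)\right)\leq 0$, deduce $d\Psi/dx\equiv 0$, invoke the equality case of the $\mathcal{H}$-theorem to obtain a local Maxwellian at each $x$, and use the transport equation to conclude $f\equiv M_{0}=M_{\infty}$. The only cosmetic difference is that you obtain $\Psi_{+}\left(f_{-}(0,\cdot)\right)\geq\Psi_{+}\left(M_{0+}\right)$ by checking that the right-hand side of \eqref{PIE2} collapses exactly to $\Psi_{+}\left(M_{0+}\right)$ at $u=0$ (your claimed cancellation does check out), whereas the paper cites Lemma~\ref{L1} via Remark~\ref{R3}; both rest on the same pointwise inequality \eqref{ineq} with comparison Maxwellian $M_{0+}$.
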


\begin{proof}
The proof is similar to the one for monatomic species in \cite{BGS-06}.

For $u=0$, $N_{i}^{+}-N_{i}^{-}=\mathcal{L}_{i}=0$ for $i\in \left\{1,3,4,5\right\} $.
Then, by Remark~\ref{R3} and inequality~\eqref{FIE2}, it holds
\begin{equation*}
0=\Psi \left( M_{\infty }\right) \leq \Psi \left( f(x,\boldsymbol{\xi }%
,I)\right) \leq \Psi \left( f(0,\boldsymbol{\xi },I)\right) =\Psi _{+}\left(
M_{0+}\right) -\Psi _{+}\left( f_{-}(0,\boldsymbol{\xi },I)\right) \leq 0%
\text{.}
\end{equation*}
Consequently,
\begin{equation*}
\int_{\mathbb{R}^{3}\times \mathbb{R}_{+}}Q_{\delta }(f,f)\log \left(
I^{1-\delta /2}f\right) \mathbf{\,}d\boldsymbol{\xi }\mathbf{\,}dI=\frac{%
d\Psi \left( f\right) }{dx}=0\text{,}
\end{equation*}
and, hence,
\begin{equation*}
Q_{\delta }(f,f)=0\text{.}
\end{equation*}
Then $f=f(x,\boldsymbol{\xi },I)$ is a local Maxwellian for any $x\in \mathbb{R}_{+}$, and by the Boltzmann equation~\eqref{HSP1}, $f=f(x,\boldsymbol{\xi },I)$ is constant in $x$ for any $\boldsymbol{\xi } \in \mathbb{R}^{3}$ such that $\xi _{1}\neq 0$.
Hence, $f=f(x,\boldsymbol{\xi },I)=M_{0}(\boldsymbol{\xi },I)=M_{\infty }(\boldsymbol{\xi },I)$. 
\end{proof}

Furthermore, by inequality~\eqref{PIE1}
\begin{multline}
\Psi _{+}\left( f_{-}(0,\boldsymbol{\xi },I)\right) \leq \frac{p_{0}}{\sqrt{%
2\pi mk_{B}T_{0}}}\log \frac{m^{3/2}p_{0}\sqrt{e}}{\left( 2\pi \right)
^{3/2}\Gamma \left( \delta /2\right) \left( ek_{B}T_{0}\right) ^{\left(
5+\delta \right) /2}}\\
+p_{\infty }\sqrt{\frac{\gamma }{mk_{B}T_{\infty }}}\mathcal{M}_{\infty
}\log \frac{\left( 2\pi \right) ^{3/2}\Gamma \left( \delta /2\right) \left(
ek_{B}T_{\infty }\right)^{\left( 5+\delta \right) /2}}{m^{3/2}p_{\infty }e}\text{.}
\label{PIE4}
\end{multline}%
This latest inequality will be considered more closely in Section \ref{EPE}, but first, motivated by~\cite{BGH-01}, we study lower estimates for $\Psi_{+}\left(f\right)$ in the next section.

\section{Lower estimate for $\Psi_{+}\left(f\right)$ \label{LE}}
For the sake of simplicity, below we consider only symmetric flows such that
\begin{equation*}
f=f\left( x,\boldsymbol{\xi },I\right) =f\left( x,\xi _{1},r,I\right) \text{
for }r=\sqrt{\xi _{2}^{2}+\xi _{3}^{2}}\text{.}
\end{equation*}

Denote%
\begin{equation}
2\pi \int\limits_{0}^{\infty }\int\limits_{0}^{\infty
}\int\limits_{0}^{\infty }zr
\begin{Bmatrix}
1 \\ 
z \\ 
z^{2}+r^{2}+\dfrac{2}{m}I%
\end{Bmatrix}
f(z,r,I)\mathbf{\,}dzdrdI=
\begin{Bmatrix}
N_{1} \\ 
N_{2} \\ 
N_{5}%
\end{Bmatrix}
\text{.}  \label{AMD}
\end{equation}%
We will consider the following problem motivated by~\cite{BGH-01}.
The solution also follows the lines of the corresponding solution in~\cite{BGH-01}.

\textbf{Problem:} Find%
\begin{equation*}
\begin{aligned}
F(N_{1},N_{2},N_{5}) &= \min \Psi _{+}\left( f\right), \\
\Psi _{+}\left( f\right) &= 2\pi \int\limits_{0}^{\infty} \int\limits_{0}^{\infty }\int\limits_{0}^{\infty}zrf\log \left(I^{1-\delta /2}f\right) \mathbf{\,}dzdrdI\text{.}
\end{aligned}
\end{equation*}
\textbf{Solution: }We first construct the Maxwellian
\begin{equation}
f_{M}=\left( \frac{2}{m}\right) ^{\delta /2}\frac{a\beta ^{3+\delta }}{%
\Gamma \left( \delta /2\right) \pi }I^{\delta /2-1}e^{-\beta ^{2}\left(
\left\vert z-w\right\vert ^{2}+r^{2}+2I/m\right) }\text{,}  \label{AMa}
\end{equation}
with the moments $N_{1}$, $N_{2}$, and $N_{5}$.
Substituting the Maxwellian~\eqref{AMa} into equations~\eqref{AMD}, we obtain the following equations
\begin{equation}
N_{1}=\frac{a}{\beta }I_{1}(\beta w),\hspace*{3mm}N_{2}=\frac{a}{\beta ^{2}}%
I_{2}(\beta w),\hspace*{3mm}N_{5}=\frac{a}{\beta ^{3}}\left( I_{3}(\beta
w)+I_{1}(\beta w)\left( 1+\frac{\delta }{2}\right) \right) \text{,}
\label{AME}
\end{equation}
where
\begin{equation}
I_{n}(s)=\int\limits_{0}^{\infty }z^{n}e^{-(z-s)^{2}}\mathbf{\,}%
dz=\int\limits_{-s}^{\infty }\left( z+s\right) ^{n}e^{-z^{2}}\mathbf{\,}%
dz\geq 0\text{.}  \label{AID}
\end{equation}
Then
\begin{align*}
N_{2} &=\frac{N_{1}}{\beta }\frac{I_{2}(s)}{I_{1}(s)}, &
N_{5}&=\frac{N_{1}}{\beta ^{2}}\left( \frac{I_{3}(s)}{I_{1}(s)}+1+\frac{\delta }{2}\right)\text{,} \\
\frac{N_{1}N_{5}}{N_{2}^{2}} &=\frac{I_{1}(s)\left[ I_{3}(s)+I_{1}(s)\left(
1+\delta /2\right) \right] }{I_{2}^{2}(s)}, & s&=\beta w\text{,}
\end{align*}%
and%
\begin{equation*}
a=\frac{N_{1}^{2}}{N_{2}}\dfrac{I_{2}(s)}{I_{1}^{2}(s)}, \quad
\beta =\frac{N_{1}}{N_{2}}\dfrac{I_{2}(s)}{I_{1}(s)}, \quad
w=\frac{N_{2}}{%
N_{1}}\dfrac{sI_{1}(s)}{I_{2}(s)}\text{.}
\end{equation*}

We have the recursion formula%
\begin{equation*}
\begin{aligned}
I_{n}(s) &=\int\limits_{0}^{\infty }z^{n}e^{-(z-s)^{2}}\mathbf{\,}%
dz=\int\limits_{0}^{\infty }z^{n-1}\left( z-s\right) e^{-(z-s)^{2}}\mathbf{\,%
}dz+sI_{n-1}(s) \\
&=\int\limits_{0}^{\infty }\frac{n-1}{2}z^{n-2}e^{-(z-s)^{2}}\mathbf{\,} dz+sI_{n-1}(s)
=sI_{n-1}(s)+\frac{n-1}{2}I_{n-2}(s)\text{.}
\end{aligned}
\end{equation*}%
Hence, we obtain
\begin{equation}
\begin{aligned}
I_{0}(s) &=\int\limits_{0}^{\infty }e^{-(z-s)^{2}}\mathbf{\,}%
dz=\int\limits_{-s}^{\infty }e^{-z^{2}}\mathbf{\,}dz, \\
I_{1}(s) &=\int\limits_{0}^{\infty }ze^{-(z-s)^{2}}\mathbf{\,}dz 
\\ &= \int\limits_{0}^{\infty }\left( z-s\right) e^{-(z-s)^{2}}\mathbf{\,}%
dz+sI_{0}(s)=sI_{0}(s)+\frac{1}{2}e^{-s^{2}}, \\
I_{2}(s) &=sI_{1}(s)+\frac{1}{2}I_{0}(s)=\left( s^{2}+\frac{1}{2}\right)
I_{0}(s)+\frac{s}{2}e^{-s^{2}}, \\
I_{3}(s) &=sI_{2}(s)+I_{1}(s)=\left( s^{3}+\frac{3}{2}s\right) I_{0}(s)+%
\frac{s^{2}+1}{2}e^{-s^{2}}, \\
I_{4}(s) &=sI_{3}(s)+\frac{3}{2}I_{2}(s)=\left( s^{4}+3s^{2}+\frac{3}{4}%
\right) I_{0}(s)+\left( \frac{s^{3}}{2}+\frac{5}{4}s\right) e^{-s^{2}}, \\
I_{5}(s) &=sI_{4}(s)+2I_{3}(s) \\&=\left( s^{5}+5s^{3}+\frac{15}{4}s\right)
I_{0}(s)+\left( \frac{s^{4}}{2}+\frac{9}{4}s^{2}+1\right) e^{-s^{2}}.
\end{aligned}
\label{AIR}
\end{equation}%
Using inequality~\eqref{AID}, we obtain for $s<0$ 
\begin{equation*}
\frac{1+\frac{5}{2s^{2}}}{1+\frac{3}{s^{2}}+\frac{3}{4s^{4}}}\frac{e^{-s^{2}}%
}{2\left\vert s\right\vert } \leq I_{0}(s)\leq \frac{1+\frac{9}{2s^{2}}+%
\frac{2}{s^{4}}}{1+\frac{5}{s^{2}}+\frac{15}{4s^{4}}}\frac{e^{-s^{2}}}{%
2\left\vert s\right\vert}.
\end{equation*}
It might be interesting to note that:
\begin{equation*}
\frac{1+\frac{5}{2s^{2}}}{1+\frac{3}{s^{2}}+\frac{3}{4s^{4}}} \approx 
\frac{1+\frac{9}{2s^{2}}+\frac{2}{s^{4}}}{1+\frac{5}{s^{2}}+\frac{15}{4s^{4}}%
}\approx 1-\frac{1}{2s^{2}}+\frac{3}{4s^{4}}-\frac{15}{8s^{6}}\quad 
\text{as $s\rightarrow -\infty.$}
\end{equation*}%
It follows that%
\begin{align*}
I_{0}(s) &\approx \left( -\frac{1}{2s}+\frac{1}{4s^{3}}-\frac{3}{8s^{5}}+\frac{15}{16s^{7}}\right) e^{-s^{2}} \\
I_{1}(s) &\approx \frac{1}{4s^{2}}e^{-s^{2}} \\
I_{2}(s) &\approx \frac{-1}{4s^{3}}e^{-s^{2}} \\
I_{3}(s) &\approx \frac{3}{8s^{4}}e^{-s^{2}}
\end{align*}%
as $s\rightarrow -\infty $, and
\begin{align*}
I_{0}(s) &\rightarrow \sqrt{\pi }\text{ } \\
I_{1}(s) &\approx \sqrt{\pi }s \\
I_{2}(s) &\approx \sqrt{\pi }s^{2}\left( 1+\frac{1}{2s^{2}}\right) \\
I_{3}(s) &\approx \sqrt{\pi }s^{3}\left( 1+\frac{3}{2s^{2}}\right)
\end{align*}%
as $s\rightarrow \infty $.

We define 
\begin{equation*}
\Phi (s):=\frac{I_{1}(s)\left[ I_{3}(s)+I_{1}(s)\left( 1+\delta /2\right)\right]}{I_{2}^{2}(s)}\text{.}
\end{equation*}
Notice that if $N_1N_5/N_2>1$, then the parameter $s\in (-\infty ,\infty )$ can be obtained from the equation 
\begin{equation}
\Phi (s)=\frac{N_{1}N_{5}}{N_{2}^{2}}\text{.}
\label{AE}
\end{equation}
Note that by relations~\eqref{AIR}, we also obtain that
\begin{equation*}
\Phi (s)=\frac{4s^{2}A^{2}+2sA+2\left( 4+\delta \right) A^{2}}{\left(
1+2sA\right) ^{2}}\text{,}
\end{equation*}
where
\begin{equation*}
\hspace*{3mm}A:=\frac{I_{1}(s)}{I_{0}(s)}=s+\frac{1%
}{2e^{s^{2}}\int\limits_{-s}^{\infty }e^{-z^{2}}\mathbf{\,}dz}\text{.}
\end{equation*}%
In fact, 
\begin{equation*}
\Phi (s)\approx \frac{1+\frac{5+\delta }{2s^{2}}}{\left( 1+\frac{1}{2s^{2}}%
\right) ^{2}}\approx 1+\frac{3+\delta }{2s^{2}}\rightarrow 1\quad \text{as $s\rightarrow \infty.$}
\end{equation*}%
and
\begin{equation*}
\Phi(s)\approx s^{2}\left( 1+\frac{\delta }{2}\right) \rightarrow \infty 
\quad \text{as $s\rightarrow -\infty$.}
\end{equation*}%
Therefore, $\Phi(s)$ takes all values in $\left( 1,\infty \right)$.
Hence, equation~\eqref{AE} has a solution $s=\widetilde{s}
$ for all $\dfrac{N_{1}N_{5}}{N_{2}^{2}}>1$. Moreover, the root is unique by
Lemma $\ref{L1}$, since $\widetilde{s}=\beta w$ for uniquely defined numbers 
$\beta$ and $w$. Although we cannot express $\widetilde{s}$ explicitly
through $\dfrac{N_{1}N_{5}}{N_{2}^{2}}$, we can obtain some asymptotic
expressions. In more detail, 
\begin{equation*}
\frac{N_{1}N_{5}}{N_{2}^{2}}\approx 1+\frac{3+\delta }{2s^{2}}\Rightarrow 
\widetilde{s}\approx \sqrt{\frac{3+\delta }{2}}\left( \frac{N_{1}N_{5}}{%
N_{2}^{2}}-1\right) ^{-1/2}\hspace*{3mm}\text{as }s\rightarrow \infty
\end{equation*}%
and%
\begin{equation*}
\frac{N_{1}N_{5}}{N_{2}^{2}}\approx s^{2}\left( 1+\frac{\delta }{2}\right)
\Rightarrow \widetilde{s}\approx -\sqrt{\frac{2N_{1}N_{5}}{\left( 2+\delta
\right) N_{2}^{2}}}\hspace*{3mm}\text{as }s\rightarrow -\infty\text{.}
\end{equation*}%
It is also clear that $\widetilde{s}$ depends on $\dfrac{N_{1}N_{5}}{%
N_{2}^{2}}$ monotonically. Then the first part is finalized and we can move
on to the construction of the function%
\begin{equation*}
F(N_{1},N_{2},N_{5})=\Psi _{+}\left( f_{M}\right) =2\pi
\int\limits_{0}^{\infty }\int\limits_{0}^{\infty }\int\limits_{0}^{\infty
}zrf_{M}\log \left( I^{1-\delta /2}f_{M}\right) \mathbf{\,}dzdrdI\text{.}
\end{equation*}

Noting that%
\begin{equation*}
\log \left( I^{1-\delta /2}f_{M}\right) =\log \frac{2^{\delta /2}a\beta
^{3+\delta }}{m^{\delta /2}\Gamma \left( \delta /2\right) \pi }-\beta
^{2}\left( w^{2}-2zw+z^{2}+r^{2}+\frac{2}{m}I\right) \text{,}\hspace*{3mm}%
s=\beta w\text{,}
\end{equation*}%
then by expressions~\eqref{AME} for the moments~\eqref
{AMD}:
\begin{equation*}
\begin{aligned}
F(N_{1},N_{2},N_{5})
&=
N_{1}\left( \log \frac{2^{\delta /2}a\beta^{3+\delta }}{m^{\delta /2}\Gamma \left( \delta /2\right) \pi }-s^{2}\right)
+ 2s\beta N_{2}-\beta ^{2}N_{5}
\\&=
N_{1}\log \frac{2^{\delta /2}a\beta ^{3+\delta }}{m^{\delta /2}\Gamma
\left( \delta /2\right) \pi }-\frac{a}{\beta }\int\limits_{-s}^{\infty
}(z+s)\left( 1+\frac{\delta }{2}+z^{2}\right) e^{-z^{2}}\mathbf{\,}dz
\\ &=
N_{1}\left( \log \frac{2^{\delta /2}a\beta ^{3+\delta }}{m^{\delta
/2}\Gamma \left( \delta /2\right) \pi }-1-\frac{\delta }{2}-\theta
(s)\right) \text{,}
\end{aligned}
\end{equation*}%
where 
\begin{equation*}
\theta (s)=\frac{\int\limits_{-s}^{\infty }(z+s)z^{2}e^{-z^{2}}\mathbf{\,}dz%
}{\int\limits_{-s}^{\infty }(z+s)e^{-z^{2}}\mathbf{\,}dz}=\frac{%
I_{3}(s)-2sI_{2}(s)+s^{2}I_{1}(s)}{I_{1}(s)}=\frac{sI_{0}(s)+e^{-s^{2}}}{%
2sI_{0}(s)+e^{-s^{2}}}\text{,}
\end{equation*}%
or, equivalently, by the relations $\left( \ref{AIR}\right) $,%
\begin{equation}
\theta (s)=\frac{1}{2}+\frac{e^{-s^{2}}}{4I_{1}(s)}=1-\frac{sI_{0}(s)}{%
2I_{1}(s)}=1+s^{2}-s\frac{I_{2}(s)}{I_{1}(s)},  \label{AT}
\end{equation}%
has typical values%
\begin{align*}
\theta (s) &\rightarrow \frac{1}{2} \quad\text{as $s\rightarrow \infty$,} \\
\theta (0) &=1, \\
\theta (s) &\approx s^{2}\rightarrow \infty \quad
\text{as $s\rightarrow -\infty.$}
\end{align*}
Note that
\begin{equation*}
a\beta ^{3+\delta }=\frac{N_{1}^{5+\delta }}{N_{2}^{4+\delta }}\dfrac{%
I_{2}^{4+\delta }(s)}{I_{1}^{5+\delta }(s)}\text{.}
\end{equation*}%
Hence,%
\begin{equation}
\begin{aligned}
F(N_{1},N_{2},N_{5}) &=N_{1}\log \left( \frac{N_{1}^{5+\delta }}{m^{\delta
/2}N_{2}^{4+\delta }}\dfrac{2^{\delta /2}I_{2}^{4+\delta }(s)}{%
I_{1}^{5+\delta }(s)\Gamma \left( \delta /2\right) \pi e^{1+\delta /2}}%
e^{-\theta (s)}\right) \text{,}  \\
\theta (s) &=\frac{sI_{0}(s)+e^{-s^{2}}}{2sI_{0}(s)+e^{-s^{2}}}\text{.}
\end{aligned}
\label{AR}
\end{equation}

\section{Entropy production estimate \label{EPE}}
Denote by the functional%
\begin{equation*}
\mathfrak{D}(f)=-\int\limits_{\mathbb{R}^{3}\times \mathbb{R}_{+}^2}Q_{\delta
}(f,f)\log \left( I^{1-\delta /2}f\right) \,d\boldsymbol{\xi}\,%
dI\,dx\geq 0
\end{equation*}%
the total entropy production. Define, in view of the inequality~\eqref{PIE4}, 
\begin{equation*}
\begin{aligned}
&\Lambda (m^{-1}p_{\infty },m^{-1}k_{B}T_{\infty },\mathcal{M}_{\infty};M_{0+}(\boldsymbol{\xi },I)) \\
&=\frac{p_{0}}{\sqrt{2\pi mk_{B}T_{0}}}\log \left( \frac{m^{3/2}p_{0}\sqrt{e}}{\left( 2\pi \right) ^{3/2}\Gamma \left( \delta /2\right)
\left(ek_{B}T_{0}\right) ^{\left( 5+\delta \right) /2}}\right) -F(N_{1}^{-},N_{2}^{-},N_{5}^{-}) 
\\ & \qquad 
+p_{\infty }\sqrt{\frac{\gamma }{mk_{B}T_{\infty }}}\mathcal{M}_{\infty
}\log \left( \frac{\left( 2\pi \right) ^{3/2}\Gamma \left( \delta /2\right)
\left( ek_{B}T_{\infty }\right) ^{\left( 5+\delta \right) /2}}{%
m^{3/2}p_{\infty }e}\right) \geq 0\text{.}
\end{aligned}
\end{equation*}%
Then we have an upper bound of the total entropy production%
\begin{equation*}
0\leq \mathfrak{D}(f)\leq \Lambda (m^{-1}p_{\infty },m^{-1}k_{B}T_{\infty },%
\mathcal{M}_{\infty };M_{0+}(\boldsymbol{\xi },I))
\end{equation*}%
Straightforward calculations give that%
\begin{multline*}
\Lambda (\frac{p_{\infty }}{m},\frac{k_{B}T_{\infty }}{m},\mathcal{M}%
_{\infty };p_{0}\frac{m^{\left( 3+\delta \right) /2}}{\left(
k_{B}T_{0}\right) ^{\left( 5+\delta \right) /2}}M_{0+}\left( \left( \frac{%
k_{B}T_{0}}{m}\right) ^{1/2}\boldsymbol{\xi },\frac{k_{B}T_{0}}{m}I\right) \\
=\frac{p_{0}}{\sqrt{mk_{B}T_{0}}}\left. \Lambda \left( \frac{p_{\infty }}{%
p_{0}},\frac{T_{\infty }}{T_{0}},\mathcal{M}_{\infty };M_{0+}(\boldsymbol{%
\xi },I)\right) \right\vert _{m=1}
\end{multline*}%
for any positive constants $p_{0}$ and $T_{0}$. We consider the normalized
Maxwellian%
\begin{equation*}
\widehat{M}_{0}(\boldsymbol{\xi },I)=\frac{I^{\delta /2-1}}{\left( 2\pi
\right) ^{3/2}\Gamma \left( \delta /2\right) }\exp \left( -\frac{\left\vert 
\boldsymbol{\xi }\right\vert }{2}-I\right) \text{,}\hspace*{3mm}\xi _{1}>0%
\text{,}
\end{equation*}%
at the interface. Then, we have (dropping the index $\infty $)%
\begin{multline}
\Lambda (p,T,\mathcal{M};\widehat{M}_{0+}(\boldsymbol{\xi },I))=p\sqrt{\frac{%
\gamma }{T}}\mathcal{M}\log \left( \frac{\left( 2\pi \right) ^{3/2}\Gamma
\left( \delta /2\right) e^{\left( 3+\delta \right) /2}T^{\left( 5+\delta
\right) /2}}{p}\right) \\
-\left. F(N_{1}^{-},N_{2}^{-},N_{5}^{-})\right\vert _{m=1}-\frac{1}{\sqrt{%
2\pi }}\left( \log \left( \left( 2\pi \right) ^{3/2}\Gamma \left( \delta
/2\right) e^{2+\delta /2}\right) \right) \geq 0\text{,}  \label{NE}
\end{multline}%
where%
\begin{equation}
\begin{aligned}
N_{1}^{-} &=N_{1}^{+}-\mathcal{L}_{1}=\frac{1}{\sqrt{2\pi }}-p\sqrt{\frac{%
\gamma }{T}}\mathcal{M}
\text{,} \\
N_{2}^{-} &=\mathcal{L}_{2}-N_{2}^{+}=p\left( 1+\gamma \mathcal{M}^{2}\right) -\frac{1}{2}
\text{,} \\
N_{5}^{-} &=N_{5}^{+}-\mathcal{L}_{5}=\frac{4+\delta }{\sqrt{2\pi }}-p\sqrt{%
\gamma T}\mathcal{M}\left( 5+\delta +\gamma \mathcal{M}^{2}\right)\text{,}
\end{aligned}
\label{NM}
\end{equation}%
$F(N_{1},N_{2},N_{5})$ is given by equation $\left( \ref{AR}\right) $, and $s$ is implicitly given by (cf. equality~\eqref{AE})
\begin{equation*}
\frac{I_{1}(s)}{I_{2}(s)}\left( s+\frac{I_{1}(s)}{2I_{2}(s)}\left( 4+\delta
\right) \right) =\frac{I_{1}(s)\left[ 2I_{3}(s)+I_{1}(s)\left( 2+\delta
\right) \right] }{2I_{2}^{2}(s)}=\Upsilon \text{,}
\end{equation*}%
with%
\begin{equation*}
\Upsilon =\Upsilon (p,T,\mathcal{M})=\frac{2\left( 1-p\sqrt{2\pi \gamma /T}%
\mathcal{M}\right) \left( 4+\delta -p\sqrt{2\pi \gamma T}\mathcal{M}\left(
5+\delta +\gamma \mathcal{M}^{2}\right) \right) }{\pi \left( 2p\left(
1+\gamma \mathcal{M}^{2}\right) -1\right) ^{2}}\text{.}
\end{equation*}%
Then, since $\dfrac{I_{1}(s)}{I_{2}(s)}$ is nonnegative, 
\begin{equation*}
\begin{aligned}
\frac{I_{1}(s)}{I_{2}(s)} &=
\sqrt{\frac{s^{2}}{\left( 4+\delta \right) ^{2}} + 
\frac{2\Upsilon }{4+\delta }}-\frac{s}{4+\delta }=\frac{\sqrt{s^{2}+2\left(4+\delta \right) \Upsilon }-s}{4+\delta }
\\ &=
\frac{2\Upsilon }{\sqrt{s^{2}+2\left( 4+\delta \right) \Upsilon }+s}\text{,}
\end{aligned}
\end{equation*}
or, equivalently,%
\begin{equation}
2\frac{I_{2}(s)}{I_{1}(s)}=\frac{\sqrt{s^{2}+2\left( 4+\delta \right)
\Upsilon }+s}{\Upsilon }\text{, }  \label{AR1}
\end{equation}%
while, by the relations~\eqref{AIR},
\begin{equation}
\frac{I_{0}(s)}{I_{1}(s)}=-2s+2\frac{I_{2}(s)}{I_{1}(s)} = 
\frac{\sqrt{s^{2}+2\left( 4+\delta \right) \Upsilon } +
\left( 1-2\Upsilon \right) s}{\Upsilon }\text{.}
\label{AR2}
\end{equation}
Hence, by relations~\eqref{AIR}, we obtain%
\begin{equation}
I_{0}(s)=\frac{e^{-s^{2}}}{2}\frac{\sqrt{s^{2}+2\left( 4+\delta \right)
\Upsilon }+\left( 1-2\Upsilon \right) s}{\left( 1+2s^{2}\right) \Upsilon
-s^{2}-s\sqrt{s^{2}+2\left( 4+\delta \right) \Upsilon }}\text{.}
\label{AR3}
\end{equation}

\section{Numerical results \label{NR}}
The allowed physical domain of positive entropy production in the $\left(p,T,\mathcal{M}\right)$-space is bounded by the surface
\begin{equation*}
    S:\Lambda (p,T,\mathcal{M};\widehat{M}_{0+}(\boldsymbol{\xi },I))=0.
\end{equation*}
Applying expression~\eqref{AR}, $\Lambda $, given by relation~\eqref{NE},~\eqref{NM}, can be recast as
\begin{multline*}
\Lambda (p,T,\mathcal{M};\widehat{M}_{0+}(\boldsymbol{\xi },I))=p\sqrt{\frac{%
\gamma }{T}}\mathcal{M}\log \frac{T^{\left( 5+\delta \right) /2}}{p}-\frac{1%
}{\sqrt{2\pi }}\log \sqrt{e} \\
-\left( \frac{1}{\sqrt{2\pi }}-p\sqrt{\frac{\gamma }{T}}\mathcal{M}\right)
\log \left( \frac{2^{\left( 5+\delta \right) /2}\sqrt{\pi }\left( \dfrac{1}{%
\sqrt{2\pi }}-p\sqrt{\dfrac{\gamma }{T}}\mathcal{M}\right) ^{5+\delta
}\Delta e^{-\widetilde{\theta }(s)}}{\left( 2p\left( 1+\gamma \mathcal{M}%
^{2}\right) -1\right) ^{4+\delta }\Upsilon ^{5+\delta }}\right) \geq 0\text{,}
\end{multline*}%
where, in view of expressions ~\eqref{AT},~\eqref{AR1},~\eqref{AR2},~\eqref{AR3},
\begin{align*}
\widetilde{\theta }(s) &=\theta (s)-\frac{1}{2}-s^{2}=\frac{1}{2}-\frac{%
s^{2}+s\sqrt{s^{2}+2\left( 4+\delta \right) \Upsilon }}{2\Upsilon } \\
\Delta  &=\left( \Upsilon +s^{2}\left( 2\Upsilon -1\right) -s\sqrt{%
s^{2}+2\left( 4+\delta \right) \Upsilon }\right) \left( s+\sqrt{%
s^{2}+2\left( 4+\delta \right) \Upsilon }\right) ^{4+\delta } \\
\Upsilon &= 
\frac{2\left( 1-p\sqrt{\dfrac{2\pi \gamma }{T}}\mathcal{M}\right) \left( 4+\delta -p\sqrt{2\pi \gamma T}%
\mathcal{M}\left( 5+\delta +\gamma \mathcal{M}^{2}\right) \right) }{\pi
\left( 2p\left( 1+\gamma \mathcal{M}^{2}\right) -1\right) ^{2}}>1\text{,}
\end{align*}%
and $s$ is implicitly given by equation~\eqref{AE}.
Here, the expression for $\Lambda $ can be recast as%
\begin{multline*}
\Lambda (p,T,\mathcal{M};\widehat{M}_{0+}(\boldsymbol{\xi },I))=\frac{1}{\sqrt{%
2\pi T}}\sqrt{T}\log \frac{T^{\left( 5+\delta \right) /2}}{p\sqrt{e}} \\
+\frac{1}{\sqrt{2\pi T}}\left( \sqrt{2\pi \gamma }p\mathcal{M-}\sqrt{T}%
\right) \log \left( \frac{2\left( \sqrt{\pi }\left( p\left( 1+\gamma 
\mathcal{M}^{2}\right) -1/2\right) \right) ^{6+\delta }T^{\left( 5+\delta
\right) /2}\Delta e^{-\widetilde{\theta }(s)}}{p\left( 4+\delta -p\sqrt{2\pi
\gamma T}\mathcal{M}\left( 5+\delta +\gamma \mathcal{M}^{2}\right) \right)
^{5+\delta }}\right) \geq 0\text{.}
\end{multline*}%

All domains presented in the following for the monatomic case $\delta =0$ were already
presented in \cite{BGH-01}, but are reproduced here as a comparison to give an
indication of the influence of polyatomicity. 

\subsection{Evaporation}
\begin{figure}
    \centering
    \includegraphics[width=\textwidth]{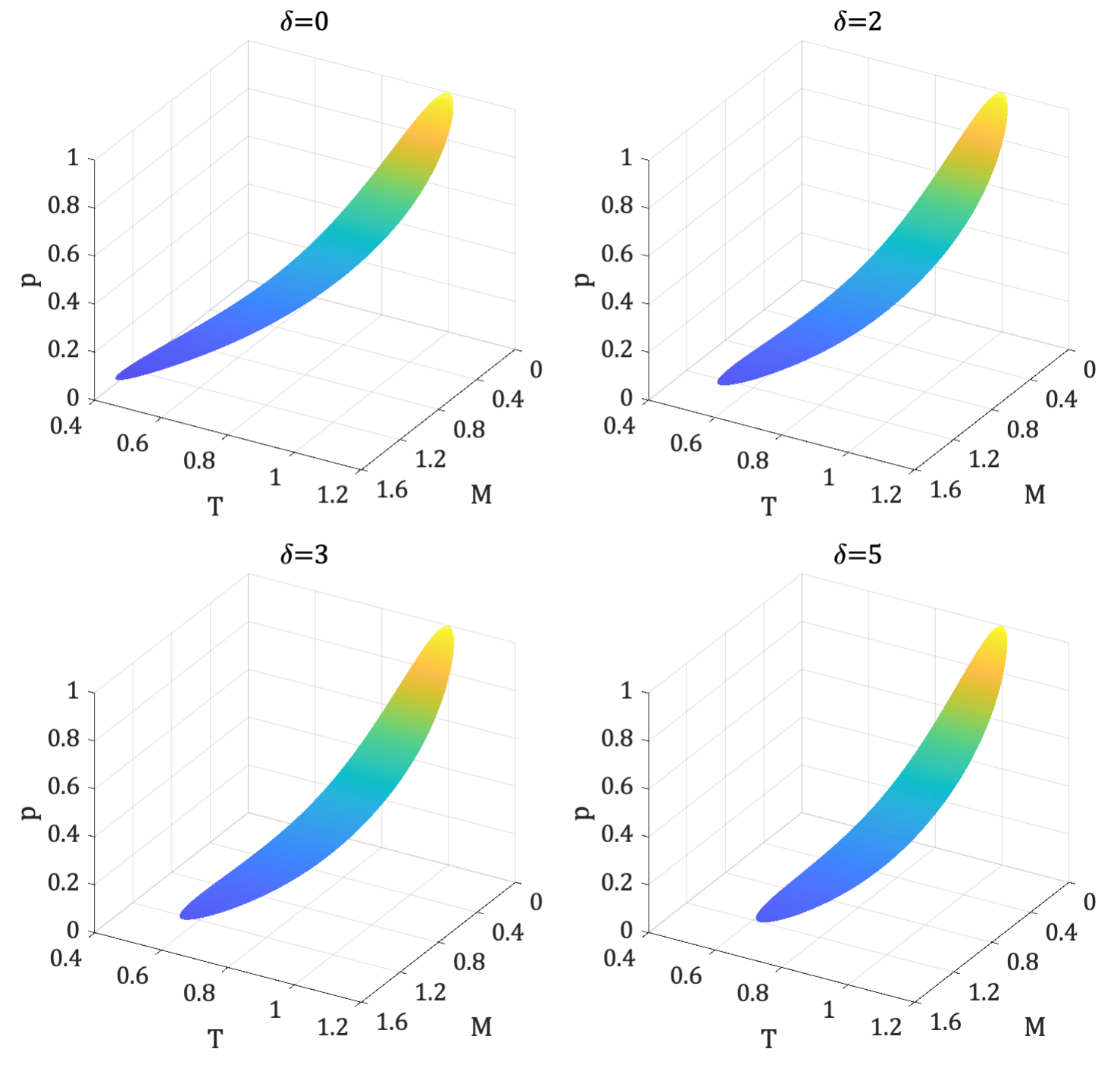}
    \caption{The pipe-like boundary surface $S$ in case of evaporation for $\delta=0,2,3,$ and $5$, respectively.
    The coloring depends on the $p$-value and is scaled individually for each figure.
    Blue represents the minimum value and yellow the maximum.}
    \label{fig:evaporationS}
\end{figure}    
In this section, we consider evaporation, that is, when $\mathcal{M}\geq 0$.
Figure~\ref{fig:evaporationS} shows the boundary surface $S$ for different numbers of internal degrees of freedom $\delta$ also taking into account the additional conditions~\eqref{NC1} and~\eqref{NCP1}. 
The color of the surface encodes the $p$-value and is scaled individually for each $\delta$.
Blue represents the minimum and yellow the maximum value.
For each $\delta$, the surface $S$ is a thin pipe-like structure, and the physical domain of positive entropy production comprises the area inside this pipe.

To further study the physical domain of positive entropy production, we next limit our attention to the case of subsonic evaporation, with the parameter $0\le \mathcal{M}\leq 1$ fixed, it is clear that there exist $p=p_{\#}\left(\mathcal{M}\right) $ and $T=T_{\#}\left( \mathcal{M}\right) $, such that
\begin{equation*}
  \Lambda\left(p_{\#}\left( \mathcal{M}\right) ,T_{\#}\left( \mathcal{M}\right),
  \mathcal{M};\widehat{M}_{0+}(\boldsymbol{\xi },I) \right) =
  \max_{p,T}\Lambda\left(p,T,\mathcal{M};\widehat{M}_{0+}(\boldsymbol{\xi },I) \right)\text{.}
\end{equation*}
We numerically construct curves%
\begin{equation*}
    p=p_{\#}\left( \mathcal{M}\right) \text{, }T=T_{\#}\left( \mathcal{M}\right) 
\end{equation*}%
of maximal entropy production in the $\left( p,T,\mathcal{M}\right) $-space.
\begin{figure}
    \centering
    \includegraphics[width=\textwidth]{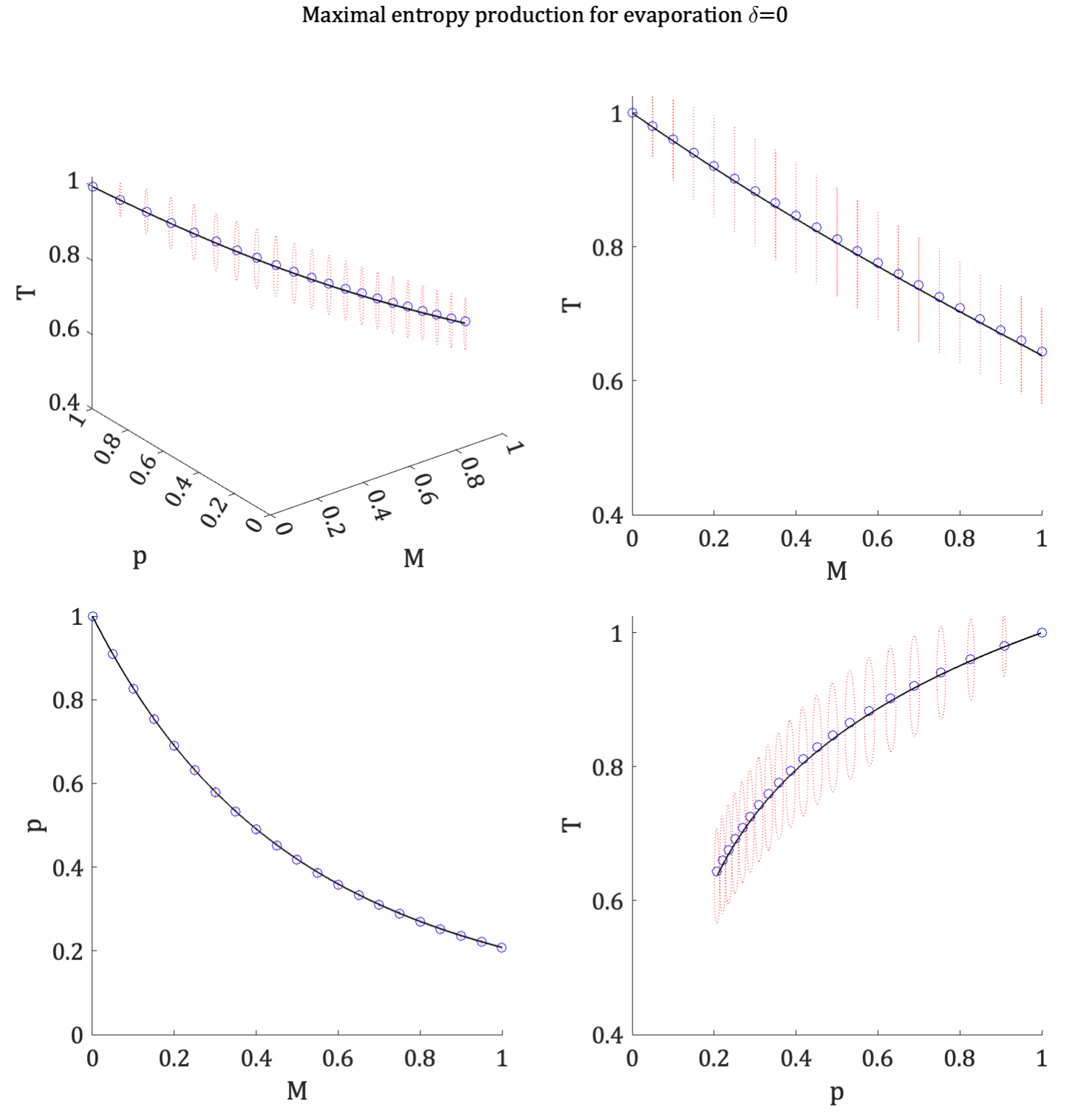}
    \caption{Maximal entropy production curve (solid lines), evaporation curve tabulated by Aoki and Sone~\cite{AS-91} (circles), and cross sections of the boundary surface $S$ (dashed lines) for subsonic evaporation of a monatomic gas.}
    \label{fig:maxEntropyEvaporation0}
\end{figure}    
Figure~\ref{fig:maxEntropyEvaporation0} shows the curves of maximal entropy production in the $\left( p,T,\mathcal{M}\right) $-space for subsonic evaporation of a monatomic gas. 
The solid lines are the curves obtained and the circles indicate the famous evaporation curve tabulated by Aoki and Sone~\cite{AS-91} based on a vast number of numerical solutions of the BGK-equation. The dashed closed lines illustrate the cross sections of the surface $S$ in Figure~\ref{fig:evaporationS} computed for the same values of $\mathcal{M}$ as the circles.
The top left panel shows these curves and markers in the $\left( p,T,\mathcal{M}\right) $-space.
For increased visibility, the projection of the curves on the $\mathcal{M}$--$T$-plane, the $\mathcal{M}$--$p$-plane, and the $p$--$T$-plane  are
presented in the top right, bottom left, and bottom right panels, respectively.
We note that although the pipe-like structure of $S$ may not be immediately clear from the plots in Figure~\ref{fig:evaporationS}, it becomes apparent when studying its cross sections from the different viewpoints illustrated in Figure~\ref{fig:maxEntropyEvaporation0}.
For each fixed $\mathcal{M}$, the pipe is very narrow in the $p$-direction and significantly more extended in the $T$-direction.

\begin{figure}
\centering
    \includegraphics[width=\textwidth]{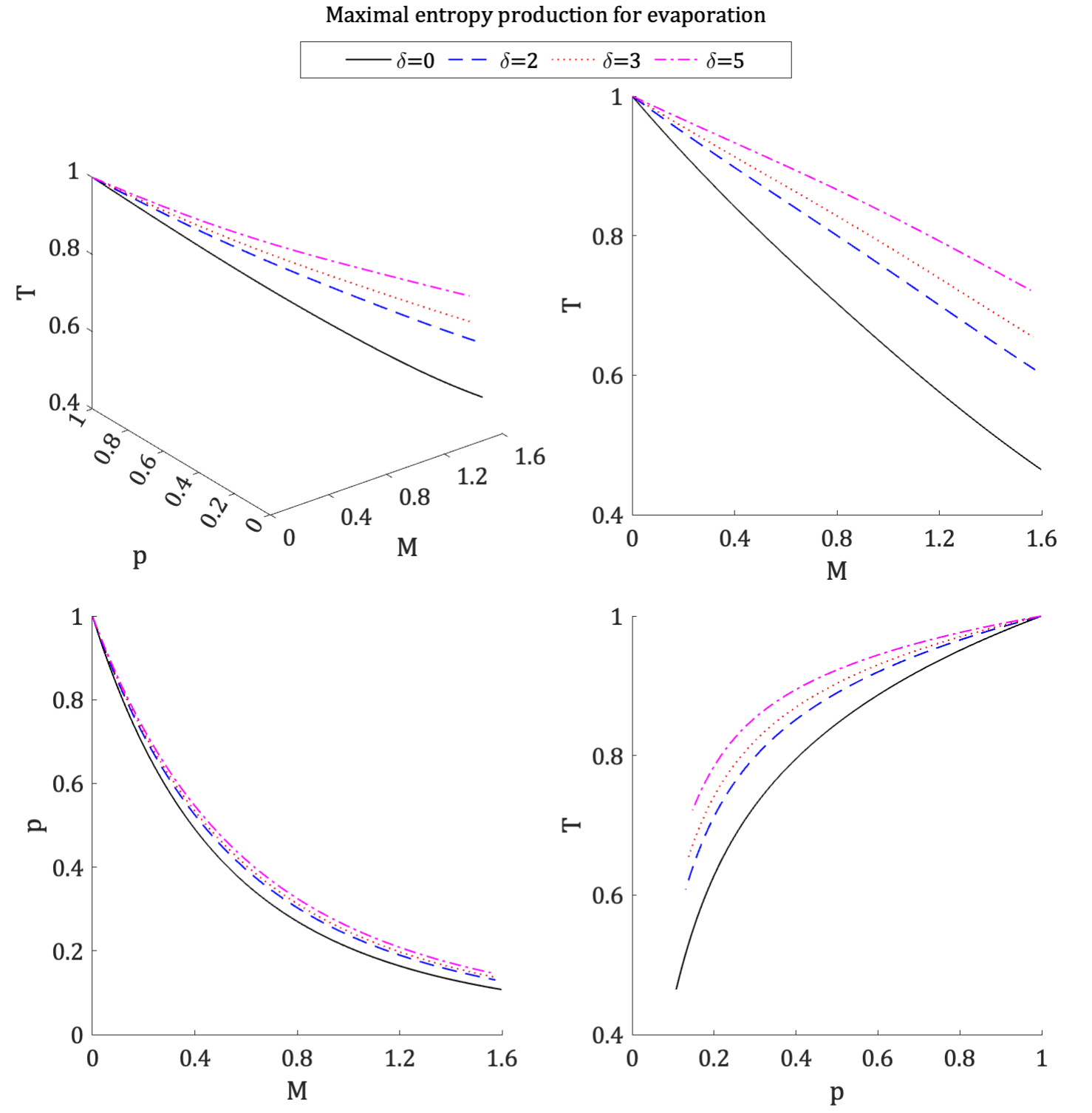}
    \caption{Curves of maximal entropy production in the $\left( p,T,\mathcal{M}\right) $-space in the case of evaporation for $\delta=0,2,3,$ and $5$.}
    \label{fig:maxEntropyEvaporation}
\end{figure}
As our next objective, we remove the upper bound on $\mathcal{M}$ and search for maximal entropy production curves.
Figure~\ref{fig:maxEntropyEvaporation} presents numerically computed curves of maximal entropy production for different numbers of internal degrees of freedom $\delta$.
The different lines represent different numbers of degrees of freedom; the solid, dashed, dotted, and dash--dotted lines correspond to $\delta=0,2,3,$ and $5$, respectively. 
First, we note that the relations above allow positive entropy production for $\mathcal{M}$ up to about 1.6---the maximal value slightly decreasing with the number of internal degrees of freedom.
Moreover, for each fixed $\mathcal{M}$, the pressure $p_{\#}$ and temperature $T_{\#}$ for maximal entropy production increase as the number of degrees of freedom $\delta$ increases.

\subsection{Condensation}
\begin{figure}
    \centering
    \includegraphics[width=\textwidth]{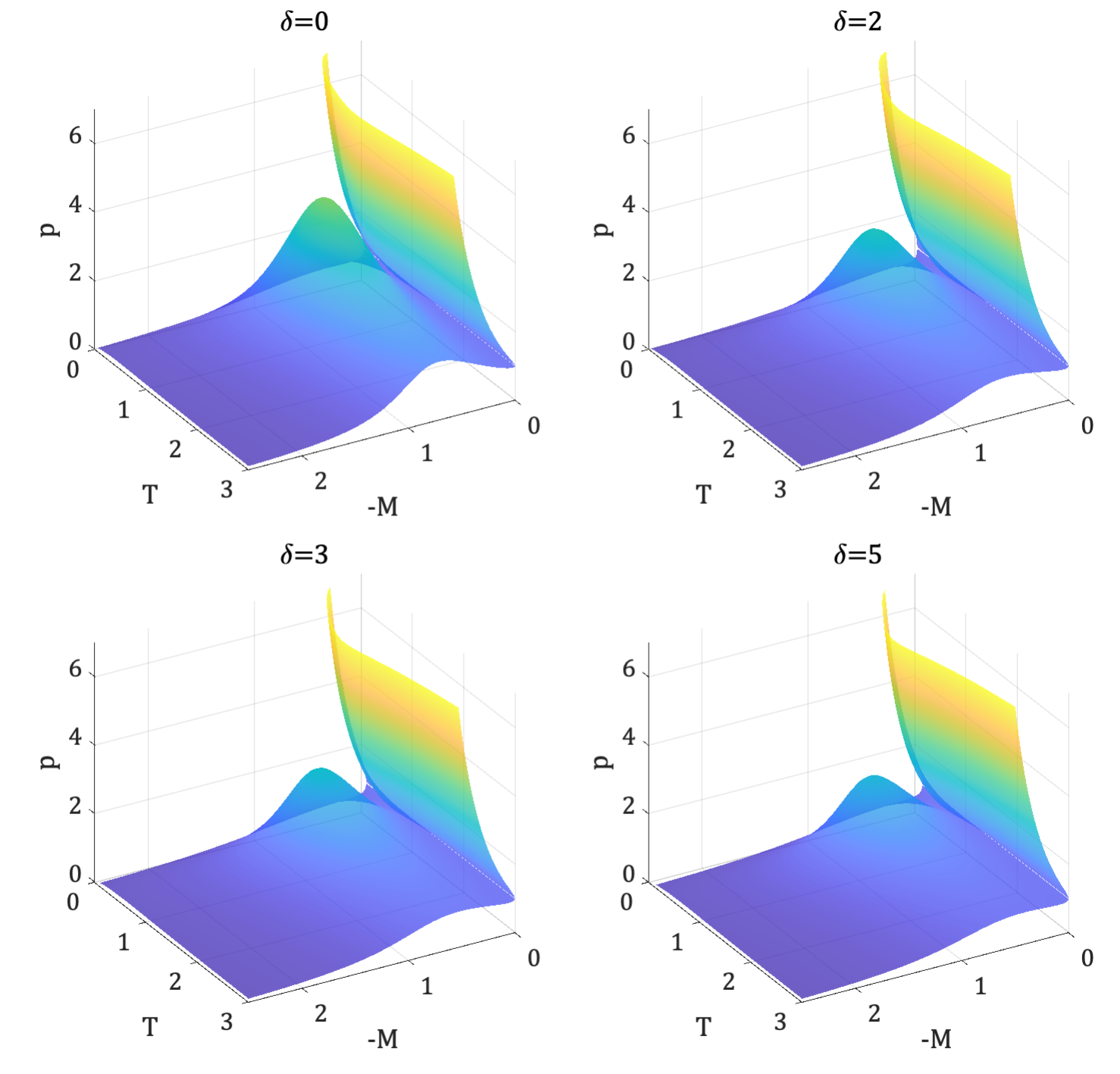}
    \caption{The boundary surface $S$ in case of condensation for $\delta=0,2,3,$ and $5$, respectively.
    The coloring depends on the $p$-value, with the same scaling for all the values of $\delta$; blue represents $p=0$ and yellow $p=7$.}
    \label{fig:condensationS}
\end{figure}    
Next, we consider condensation, that is, when $\mathcal{M}<0$.
Figure~\ref{fig:condensationS} illustrates the boundary surface $S$ for different numbers of internal degrees of freedom $\delta$ taking into account the additional conditions~\eqref{NC1} and~\eqref{NCN1}.
All graphs are cropped so that values of $p$ above 7 are not displayed. 
The color depends on the value of $p$, with the same scaling for all the values of $\delta$; blue represents $p=0$ and yellow $p=7$.
For all degrees of freedom studied, the corresponding surfaces share a few common characteristics.
First, for small values of $|\mathcal{M}|$ and any value of $T$, the range of $p$ for the physical domain of positive entropy production has both a lower and an upper bound.
Moreover, in the limit $\mathcal{M}\to 0^-$, both the upper and lower bounds tend to $p=1$ for any $T>0$.
Looking further at the lower bound, it can be seen that, for small $T>0$, the lower bound has a local maximum in $p$ around $\mathcal{M}=-1$.
The value of $p$ for this local maximum decreases as the number of internal degrees of freedom $\delta$ increases.

\begin{figure}
    \centering
    \includegraphics[width=\textwidth]{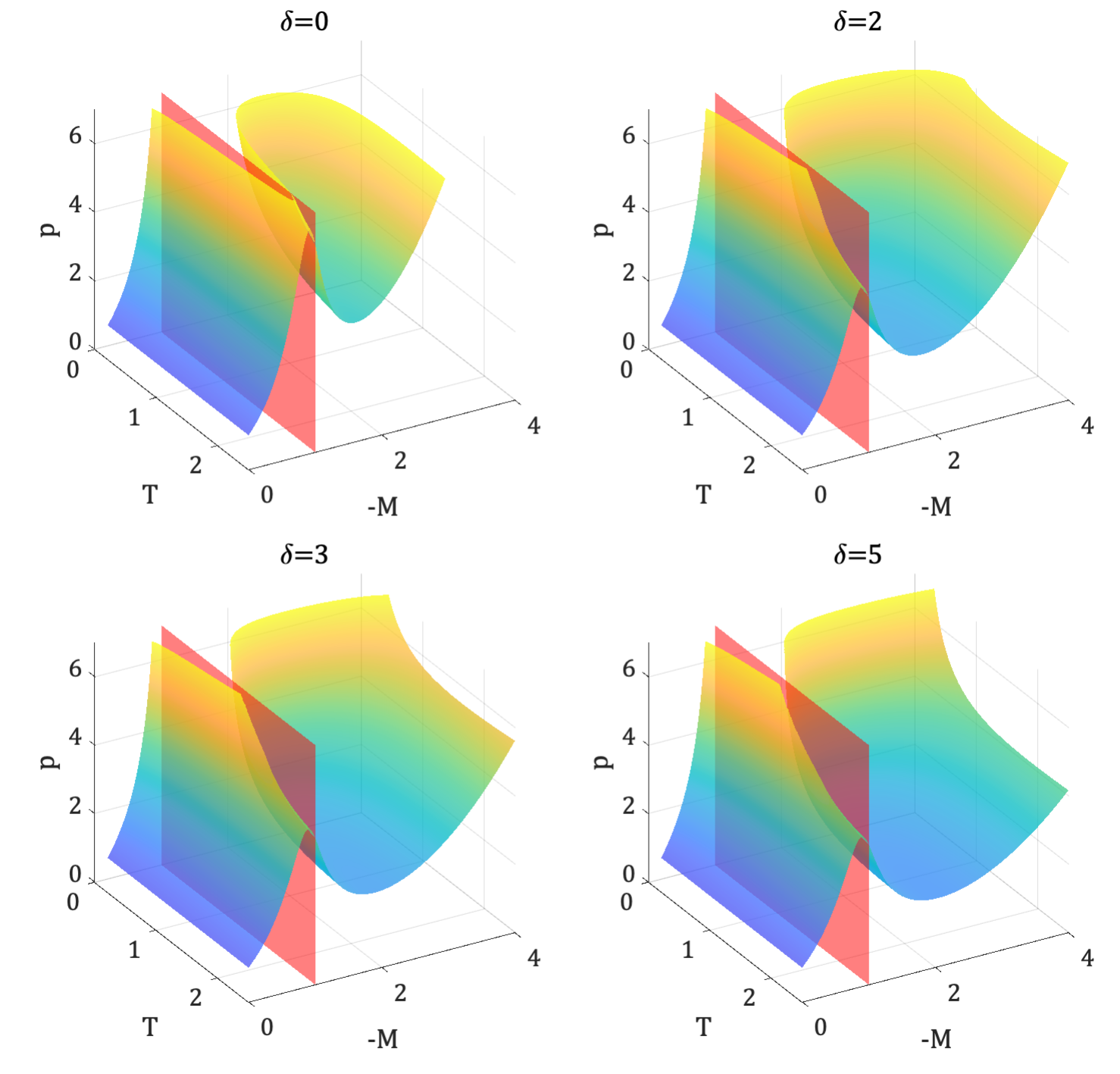}
    \caption{Maximal entropy production surfaces in the $\left( p,T,\mathcal{M}\right)$-space in case of condensation for $\delta=0,2,3,$ and $5$, respectively; illustrated together with the plane $\mathcal{M}=-1$.
    The coloring of the surfaces depends on the $p$-value, with the same scaling for all the values of $\delta$; blue represents $p=0$ and yellow $p=7$.}
    \label{fig:maxEntropyCondensation}
\end{figure}
To further study the physical domain of positive entropy production in the case of condensation, we fix the two parameters $T>0$ and $\mathcal{M}<0$ and note that there exists $p=p_{\ast }\left( T,\mathcal{M} \right) $, such that 
\begin{equation*}
\Lambda (p_{\ast }\left( T,\mathcal{M}\right) ,T,\mathcal{M};\widehat{M}%
_{0+}(\boldsymbol{\xi },I))=\max_{p}\Lambda (p,T,\mathcal{M};\widehat{M}%
_{0+}(\boldsymbol{\xi },I))\text{.}
\end{equation*}%
Thus, for $\mathcal{M}<0$ and $T>0$, we construct the surfaces 
\begin{equation*}
p=p_{\ast }\left( T,\mathcal{M}\right) 
\end{equation*}%
of maximal entropy production in the $\left( p,T,\mathcal{M}\right) $-space.
Figure~\ref{fig:maxEntropyCondensation} shows these surfaces in the $\left( p,T,\mathcal{M}\right)$-space for different numbers of internal degrees of freedom together with the plane $\mathcal{M}=-1$.
For all degrees of freedom studied, the surfaces share similar characteristics: with a clear increase in $p_\ast$ as $|\mathcal{M}|$ increases in the range $\mathcal{M}<0$---starting from the line $p_\ast=1$ as $\mathcal{M}$ tends to $0^-$---up to about (somewhat less than) $\mathcal{M}=-1$.
Looking at the intersection of the surface of maximal entropy production with the plane $\mathcal{M}=-1$, we see that for a larger number of internal degrees of freedom $\delta$, the decrease of $p_\ast$ is faster as $T$ increases.
For $\mathcal{M}<-1$, the surface of maximal entropy production has a well-shaped structure, the width of this well in the $\mathcal{M}$-direction increases as $\delta$ increases.

\begin{figure}
    \centering
    \includegraphics[width=\textwidth]{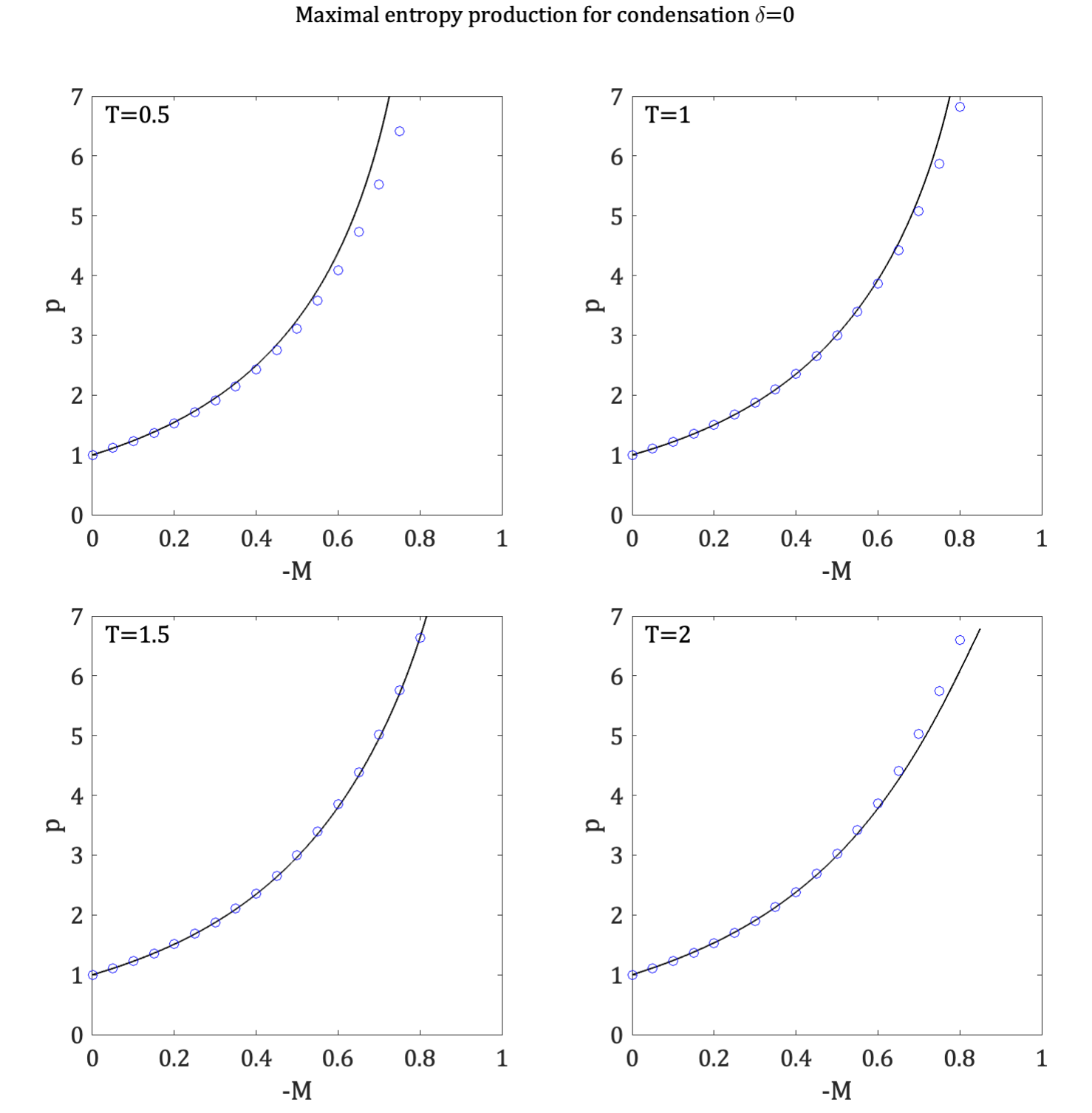}
    \caption{Maximal entropy production curve (solid lines) in the $\mathcal{M}$--$p$-plane for $T=0.5,1,1.5,$ and $2$ compared to the data tabulated by Aoki and Sone~\cite{AS-91} (circles) for subsonic condensation of a monatomic gas.}
    \label{fig:maxEntropyCondensation0}
\end{figure}
Figure~\ref{fig:maxEntropyCondensation0} shows the intersection curves of the surface of maximal entropy production and the planes $T=0.5,T=1,T=1.5,$ and $T=2$ for subsonic condensation of a monatomic gas. 
The solid lines represent our computed curves, while the circles indicate the corresponding intersection curves from the well-known condensation surface tabulated by Aoki and Sone~\cite{AS-91} based on a large number of numerical solutions of the BGK equation. 
We observe that all the intersection curves agree well with the tabulated values for small values of $|\mathcal{M}|$, while the agreement gradually decreases as $|\mathcal{M}|$ increases.
The maximal entropy curve increases faster than the tabulated curve for $T=0.5$ and $T=1$, but with a smaller discrepancy for $T=1$, increases slower than the tabulated curve in some part and faster in some other part of the interval for $T=1.5$, and then slower than the tabulated curve for $T=2$. 
Overall, the agreement is particularly strong for $T=1.5$, while for $T=1$ the agreement is better up to a certain value of $|\mathcal{M}|$. 

\begin{figure}
    \centering
    \includegraphics[width=\textwidth]{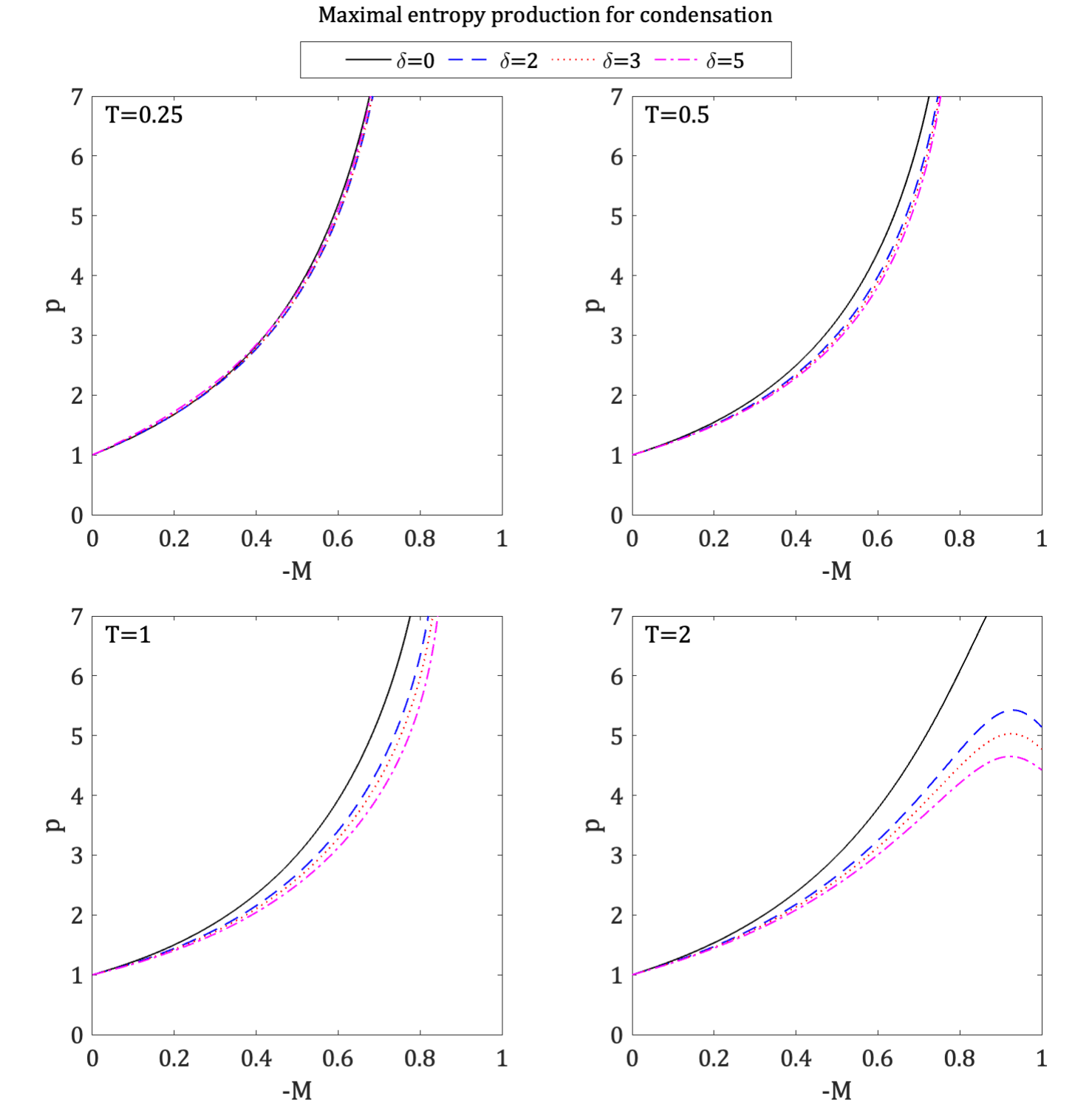}
    \caption{Maximal entropy production curve in the $\mathcal{M}$--$p$-plane for $\delta=0,2,3,$ and $5$, respectively for $T=0.25,0.5,1.0,$ and $2$ for subsonic condensation.}
    \label{fig:maxEntropyCondensationSlices}
\end{figure}   
Figure~\ref{fig:maxEntropyCondensationSlices} shows the intersection curves of the surface of maximal entropy production with the planes $T=0.25,T=0.5,T=1,$ and $T=2$ in the case of condensation for different numbers of degrees of freedom.
The different lines represent different numbers of internal degrees of freedom; the solid, dashed, dotted, and dash--dotted lines correspond to $\delta=0,2,3,$ and $5$, respectively. 
First, we note that all the intersection curves agree well for all the numbers of internal degrees of freedom for small values of $|\mathcal{M}|$, while the differences become more pronounced as $|\mathcal{M}|$ increases. The agreement of the maximal entropy curves is fairly good for $T = 0.25$.
As $T$ increases, the differences between the curves increase, with lower values of $p^\ast$ observed for larger values of $\delta$. 

\section*{Acknowledgements} 
N.B. acknowledges travel grants from the French Institute in Sweden through the FRÖ program and SVeFUM. S.B. acknowledges a travel grant from the French Institute in Sweden through the TOR program. N.B. gratefully thanks Kazuo Aoki for valuable discussions and comments on the manuscript.

\end{document}